\providecommand{\U}[1]{\protect\rule{.1in}{.1in}}
\newtheorem{theorem}{Theorem}
\newtheorem{definition}[theorem]{Definition}
\newtheorem{lemma}[theorem]{Lemma}
\newtheorem{example}[theorem]{Example}
\newtheorem{proposition}[theorem]{Proposition}
\newtheorem{corollary}[theorem]{Corollary}
\newtheorem{remark}[theorem]{Remark}
\newenvironment{proof}[1][Proof]{\noindent\textbf{#1. }}{\ $\square$\bigskip}
\begin{document}

\title{Induced Riemannian structures on null hypersurfaces}

\author{Manuel Guti\'{e}rrez$^{1}$\thanks{This paper was supported in part by MEYC-FEDER Grant MTM2013-41768-P and Junta de Andalucia research group FQM-324.}\\Departamento de \'Algebra, Geometr\'ia y Topolog\'ia\\Universidad de M\'alaga. Spain.\\ \ and Benjam\'{\i}n Olea$^{2}$ \\Departamento de Matem\'atica aplicada. Universidad de M\'alaga.\\ $^{1}$\texttt{mgl@agt.cie.uma.es}, $^{2}$\texttt{benji@agt.cie.uma.es} }

\date{}
\maketitle

\begin{abstract}
Given a null hypersurface $L$ of a Lorentzian manifold, we construct a
Riemannian metric $\widetilde{g}$ on it from a fixed transverse vector field
$\zeta$. We study the relationship between the ambient Lorentzian manifold,
the Riemannian manifold $(L,\widetilde{g})$ and the vector field $\zeta$. As
an application, we prove some new results on null hypersurfaces, as well as
known ones, using Riemannian techniques.
\end{abstract}

\textbf{keywords} Lorentzian manifolds, null hypersurface, rigging vector field, rigged vector field.\\

\textbf{msc2010} 53C50, 53C80, 53B30.

\section{Introduction}

\label{intro} Null hypersurfaces are exclusive objects from Lorentzian
manifolds, in the sense that they have not Riemannian counterpart, so they are
interesting by their own. There are also physical situations where they are
interesting objects since they represent light fronts in general relativity.
For example, in causality theory, a lower bound of the radius of injectivity
of null cones is important in understanding properties of solutions of wave
equations, \cite{ChenLeFloch2008,Klai-Rod2008}. They have been recently used
in the study of the formation of trapped surfaces and in the stability of
Minkowski space, \cite{Chris2009,ChristodoulouKlainerman1993} and they are
part of the quasi-local notions on black holes, which has been
introduced to understand black hole thermodynamic,
\cite{Ast99,Ast00,Ast02,GouJar06}. In a mathematical context, null
cones in Minkowski space are a key tool in the Fefferman-Graham construction to study
conformal invariants, which has been very influential in the celebrated
AdS/CFT correspondence, \cite{FeffermanGraham2008}. Moreover, it has been
suggested that null cones can be used in new
variants of the above mentioned AdS/CFT duality, \cite{Solodukhin2005}.

It is well-known that the main drawback to study null hypersurfaces as part of
standard submanifold theory is the degeneracy of the induced metric, which
forces to develop specific techniques. One of the most usual (but not the
unique) is to fix a geometric data formed by a null section and a screen
distribution (or equivalently a null section and a null transverse section) on the null hypersurface. This allows to define an induced
connection and a null second fundamental form, which gives the expected
information on the extrinsic geometry. However, the induced connection does
not arise necessarily from a metric and is clear that it is not an appropriate
tool to study intrinsic geometric properties. Moreover, both the null section
and the screen distribution are fixed arbitrarily and independently and it is
not clear how to choose them in order to have a reasonable coupling between
the properties of the null hypersuperface and the ambient space. Despite these limitations,
there are remarkable success, as those cited above.

In this paper we show a technique to construct a Riemannian metric on a null
hypersurface. It is based on the arbitrary choice of a transverse vector
field, called \textit{rigging field}, from which we construct a null section, which we call \textit{rigged field} and a screen
distibution. The improvement over the above technique is twofold: first, the geometric data depends only on the choice of
a unique object, the rigging field.  Secondly, we
introduce a Riemannian structure coupled with it, which is used to study the null hypersurface. Those structures are
not natural in the sense that they depend on the choice of the rigging field, but the flexibility
to choose it turns this limitation into an advantage, allowing us to use valuable information on the
ambient space, for example in the presence of symmetries. We cite Lemma \ref{campoU}, Corollary \ref{xi paralelo}, Theorem
\ref{Th. 2}, Theorem \ref{descomposicion}, Proposition \ref{puntosconjugados}  and Theorem \ref{Teor1} as examples.

The construction of the Riemannian metric $\widetilde{g}$ on a null
hypersurface $L$ is made in Section \ref{seccionproyeccion}. Roughly speaking,
the extrinsic properties of the null hypersurface are related to the
properties of the Riemannian manifold $(L,\widetilde{g})$. For example, if $H$
is the null mean curvature of $L$, then $H=-\widetilde{\operatorname{div}}\xi$
where $\xi$ is the rigged vector field. Moreover, $L$ is totally geodesic if
and only if $\xi$ is $\widetilde{g}$-orthogonally Killing. The key point is that we can tune the geometry of the ambient
Lorentzian manifold $(M,g)$ and $(L,\widetilde{g})$ for each situation. For example, it is classical to renormalize the null vector field $\xi$ so that it becomes geodesic.
In our approach we can achieve  a geodesic rigged field  if the rigging is conformal, Lemma \ref{campoU}, or if $L$ is a null cone and $\xi$ is the gradient of the time coordinate of a normal chart, Theorem \ref{Teor4}. Moreover, we can use the conformal symmetry of the ambient space to prove new results on the geometry of null hypersurfaces using Riemannian techniques, Theorem \ref{Th. 2}.

In Section \ref{seccioncurvatura}, we establish some formulas linking the
curvature of the ambient manifold and the curvature of $(L,\widetilde{g})$.
These kind of relations are necessary. They allow us to obtain new properties
of null hypersurfaces and they are used in Section
\ref{seccionaplicaciones}, where we illustrate our ideas with some
applications. In Theorem \ref{Th. 2} mentioned above we use the Bochner technique to show a
curvature condition which implies that a compact totally umbilic null hypersurface
must be totally geodesic. Theorem \ref{descomposicion} shows that the induced
Riemannian metric $\widetilde{g}$ in a totally umbilic null hypersurface is
locally a twisted product, which can be a warped or direct product depending on
the properties of the ambient space and the rigging field, giving a new
insight to twisted metrics in Lorentzian geometry. We show an example of how we can tune both geometries to study null conjugate points.
This is done in Proposition \ref{puntosconjugados} where a close link between conjugate points along null geodesics as seen in the ambient space and
in a null hypersurface with our Riemannian metric is given. This allows us to give a new proof, using Riemannian techniques, of a localization result
for conjugate points along null geodesic, Theorem \ref{Teor4}. These ideas are used to prove that the first conjugate point of a
null geodesic contained in a totally umbilic
null cone has maximum multiplicity, Theorem \ref{Teor1}. Finally, in Section
\ref{NullSurfaces}, we adapt the main ideas
to null submanifods, which allows us to apply Gauss-Bonnet theorem to compact
null surfaces.

Summing up, the rigging technique introduced in this paper is a convenient way to handle the technique of introducing a geometric
data on $L$, so we can get at least the same results obtained with it. Moreover we have some extra benefits, like a Riemannian structure on $L$ with an adequate tuning with the ambient geometry as well as with the geometric data, which represents an objective advance in the theory. In fact, it allows us to think with classical Riemannian tools. This lead us to reprove well known results in the literature, e. g. Theorem \ref{Teor4} and to get new results, e. g. Proposition \ref{Prop1}, Theorems \ref{Th. 2} and Theorem \ref{Teor1}.

\section{Geometry of null hypersurfaces}

\label{review} We review some facts about null hypersurfaces to fix notations
(see \cite{DuggaBejan1996} for details).

Given $(M,g)$ a $n$-dimensional time-orientable Lorentzian manifold and $L$ a
null hypersurface, we can choose a null vector field $\xi\in\mathfrak{X}(L)$.
A \emph{screen distribution} $\mathcal{S}$ is a complementary distribution in
$TL$ to $\xi$ and the \emph{transverse distribution} is the unique
one-dimensional null distribution orthogonal to $\mathcal{S}$ not contained in
$TL$. Being $M$ time-orientable, there is a null vector field $N$ over $L$
which generates the transverse distribution and it can be normalized so that
$g(N,\xi)=1$. It is called \emph{null transverse vector field}.

We usually denote by $U,V,W$ vector fields in $L$ and $X,Y,Z$ vector fields in
$\mathcal{S}$. If $U,V\in\mathfrak{X}(L)$, the vector fields $\nabla_{U}V$ and
$\nabla_{U}N$ can be decomposed in the following way.
\begin{align}
\nabla_{U}V  &  =\nabla_{U}^{L}V+B(U,V)N,\label{descomp1}\\
\nabla_{U}N  &  =\tau(U)N-A(U),\nonumber
\end{align}
where $\nabla_{U}^{L}V,A(U)\in TL$ and $\tau$ is a one-form. The operator
$\nabla^{L}$ is a symmetric connection that satisfies
\[
\left(  \nabla_{U}^{L}g\right)  (V,W)=B(U,V)g(N,W)+B(U,W)g(N,V),
\]
$A$ is the \emph{shape operator} of $L$ and $B$ is a symmetric tensor, called
the \emph{second fundamental form} of $L$, that satisfies $B(U,V)=-g(\nabla
_{U}\xi,V)$. Moreover, $B(\xi,\cdot)=0$ and $\xi$ is a pregeodesic vector
field, in fact $\nabla_{\xi}\xi=-\tau(\xi)\xi$.

The notion of totally geodesic or umbilic hypersurface also has sense in the
degenerate case and they do not depend on the election of the null section neither the screen distribution. Indeed, $L$ is \emph{totally geodesic} if $B\equiv0$ and
totally umbilic if $B=\rho g$ for certain $\rho\in C^{\infty}(L)$.

Given $U\in\mathfrak{X}(L)$, the vector field $\nabla_{U}\xi$ belongs to $L$,
so it can be decomposed as
\[
\nabla_{U}\xi=-\tau(U)\xi-A^{\ast}(U),
\]
where $A^{\ast}(U)\in\mathcal{S}$. The endomorphism $A^{\ast}$ is called the
\emph{shape operator} of $\mathcal{S}$ and it satisfies $B(U,V)=g(A^{\ast
}(U),V)$ and
\begin{equation}
B(A^{\ast}(U),V)=B(U,A^{\ast}(V)). \label{ABadjunto}%
\end{equation}

The trace of $A^{\ast}$ is the \emph{null mean curvature} of $L$, explicitly
given by
\[
H_{p}=\sum_{i=3}^{n}g(A^{\ast}(e_{i}),e_{i})=\sum_{i=3}^{n}B(e_{i},e_{i}),
\]
being $\{e_{3},\ldots,e_{n}\}$ an orthonormal basis of $\mathcal{S}_{p}$.

On the other hand, given $U\in\mathfrak{X}(L)$ and $X\in\mathcal{S}$, we
decompose
\begin{equation}
\nabla_{U}^{L}X=\nabla_{U}^{\ast}X+C(U,X)\xi, \label{descomp2}%
\end{equation}
where $\nabla_{U}^{\ast}X\in \mathcal{S}$.
The tensor $C$ holds $C(U,X)=-g(\nabla_{U}N,X)=g(A(U),X)$ and
\[
C(X,Y)-C(Y,X)=g(N,[X,Y]).
\]

In case of being $\mathcal{S}$ integrable, $\nabla^{\ast}$ is the induced
Levi-Civita connection from $(M,g)$ and Equations (\ref{descomp1}) and
(\ref{descomp2}) show that its second fundamental form is
\begin{equation}
\mathbb{I}^{\mathcal{S}}(X,Y)=C(X,Y)\xi+B(X,Y)N,\label{Eq3}
\end{equation}
where $X,Y\in\mathcal{S}$.

The \emph{curvature tensor of} $\nabla^{L}$ is defined as $R_{UV}^{L}%
W=\nabla_{U}^{L}\nabla_{V}^{L}W-\nabla_{V}^{L}\nabla_{U}^{L}W-\nabla_{\lbrack
U,V]}^{L}W$ and it satisfies
\begin{equation}
R_{UV}^{L}\xi=R_{UV}\xi\label{eqcurv}%
\end{equation}
and the so called \emph{Gauss-Codazzi equations}
\begin{align}
g(R_{UV}W,X)  &  =g(R_{UV}^{L}W,X)+B(U,W)g(A(V),X)\label{eqcurv2}\\
&  -B(V,W)g(A(U),X),\nonumber\\
g(R_{UV}W,\xi)  &  =\left(  \nabla_{U}^{L}B\right)  (V,W)-\left(  \nabla
_{V}^{L}B\right)  (U,W)+\tau(U)B(V,W)\label{eqcurvatura1}\\
&  -\tau(V)B(U,W),\nonumber\\
g(R_{UV}W,N)  &  =g(R_{UV}^{L}W,N),\nonumber
\end{align}
where $U,V,W\in\mathfrak{X}(L)$ and $X\in\mathcal{S}$. From these equations it
can be deduced the following ones.
\begin{align}
g(R_{UV}X,N)  &  =\left(  \nabla_{U}^{\ast L}C\right)  (V,X)-\left(
\nabla_{V}^{\ast L}C\right)  (U,X)+\tau(V)C(U,X)\nonumber\\
&  -\tau(U)C(V,X),\label{GC2}\\
g(R_{UV}\xi,N)  &  =C(V,A^{\ast}(U))-C(U,A^{\ast}(V))-d\tau(U,V), \label{GC3}%
\end{align}
where $\nabla_{U}^{\ast L}C$ is defined as
\[
\left(  \nabla_{U}^{\ast L}C\right)  (V,X)=U(C(V,X))-C(\nabla_{U}%
^{L}V,X)-C(V,\nabla_{U}^{\ast}X).
\]

Using Equation (\ref{eqcurvatura1}), we can compute the null sectional
curvature respect to $\xi$ of a null plane $\Pi=span(X,\xi)$, where
$X\in\mathcal{S}$ is unitary,

\begin{equation}
\mathcal{K}_{\xi}(\Pi)=\left(  \nabla_{\xi}^{L}B\right)  (X,X)-\left(
\nabla_{X}^{L}B\right)  (\xi,X)+\tau(\xi)B(X,X). \label{Eq1}
\end{equation}

In particular, if $L$ is totally geodesic, we have $\mathcal{K}_{\xi}(\Pi)=0$
for any null tangent plane $\Pi$ to $L$. Since the sign of the null sectional
curvature only depends on the null plane and not on the choosen null vector,
we can state the following.

\begin{proposition}
Let $M$ be a Lorentzian manifold and $p\in M$ such that
$\mathcal{K}(\Pi)\neq0$ for any null plane $\Pi\subset T_{p}M$. Then, it does
not exist any totally geodesic null hypersurface through $p$.
\end{proposition}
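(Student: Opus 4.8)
The plan is to argue by contradiction, leaning entirely on the computation of the lightlike sectional curvature recorded immediately before the statement. First I would assume that a totally geodesic lightlike hypersurface $L$ passes through $p$, and equip $L$ with a lightlike section $\xi$ and a screen distribution $SL$ as in Section \ref{review}. Since $SL_p$ is a nontrivial spacelike subspace, I can choose a unitary $X\in SL_p$ and form the lightlike plane $\Pi=span(X,\xi_p)\subset T_pL\subset T_pM$, which is a genuine lightlike plane of $T_pM$ on which the hypothesis applies.

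Next I would evaluate $\mathcal{K}_\xi(\Pi)$. As $L$ is totally geodesic we have $B\equiv0$, so every term in the formula $\mathcal{K}_\xi(\Pi)=\big(\nabla_\xi^L B\big)(X,X)-\big(\nabla_X^L B\big)(\xi,X)+\tau(\xi)B(X,X)$ vanishes, giving $\mathcal{K}_\xi(\Pi)=0$. This is exactly the assertion already made in the excerpt for totally geodesic $L$, so no new computation is really needed here.

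The last step is to pass from $\mathcal{K}_\xi$ to the intrinsically defined $\mathcal{K}$. The scalar $\mathcal{K}_\xi(\Pi)$ is the ambient lightlike sectional curvature of $\Pi$ computed with the null generator $\xi_p$; as recalled just above the statement, whether this quantity vanishes does not depend on the chosen lightlike vector. Hence $\mathcal{K}(\Pi)=0$, which contradicts the hypothesis $\mathcal{K}(\Pi)\neq0$ for every lightlike plane $\Pi\subset T_pM$, and the claimed nonexistence follows.

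The argument is short, and the only delicate points are conceptual rather than computational: one must make sure that a totally geodesic $L$ through $p$ actually carries a lightlike tangent plane on which $\mathcal{K}$ is being tested (guaranteed by $SL_p\neq0$, i.e. by $\dim M\geq 3$), and one must invoke the stated invariance of the vanishing of the lightlike sectional curvature under change of null generator in order to identify $\mathcal{K}_\xi(\Pi)$ with $\mathcal{K}(\Pi)$.
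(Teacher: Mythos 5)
Your argument is correct and is exactly the paper's own (implicit) proof: the formula $\mathcal{K}_{\xi}(\Pi)=\big(\nabla_{\xi}^{L}B\big)(X,X)-\big(\nabla_{X}^{L}B\big)(\xi,X)+\tau(\xi)B(X,X)$ gives $\mathcal{K}_{\xi}(\Pi)=0$ when $B\equiv0$, and the independence of the vanishing of the lightlike sectional curvature from the chosen null generator yields the contradiction. Your added remark that one needs $SL_{p}\neq0$ (i.e. $\dim M\geq3$) to have a lightlike tangent plane at all is a sensible clarification that the paper leaves implicit.
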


As a simple application of this result, the Friedmann models do not possess
totally geodesic null hypersurfaces, since its null sectional curvature never
vanishes (Corollary 6.5 of \cite{GutOle09}).

\section{Riemannian metric induced on a null hypersurface}

\label{seccionproyeccion}

In this section, we show how to induce a Riemannian metric on a null
hypersurface $L$. The construction depends on the choice of a vector field in
a neighborhood of $L$ and, as we will see, it also induces a null vector field
and a screen distribution on $L$.

Take $\zeta$ a vector field defined in some open set containig $L$ and denote
by $\alpha$ the $1$-form metrically equivalent to $\zeta$. Take $\omega
=i^{\ast}\alpha$, being $i:L\hookrightarrow M$ the canonical inclusion, and
consider the tensors $\overline{g}=g+\alpha\otimes\alpha$ and $\widetilde
{g}=i^{\ast}\overline{g}$.

\begin{lemma}
\label{lema1} Given a point $p\in L$, the following statements hold.

\begin{enumerate}
\item $\overline{g}_{p}$ is degenerate if and only if $\zeta_{p}$ is timelike
and unitary for $g$.

\item $\widetilde{g}_{p}$ is Riemannian if and only if $\zeta_{p}\notin
T_{p}L$.
\end{enumerate}
\end{lemma}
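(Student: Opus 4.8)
The plan is to reduce both statements to the single observation that, for any $X,Y\in T_pM$,
\[
\overline g_p(X,Y)=g(X,Y)+g(\zeta_p,X)g(\zeta_p,Y)=g\big(X+g(\zeta_p,X)\,\zeta_p,\;Y\big),
\]
so that the linear form $\overline g_p(X,\cdot)$ vanishes precisely when the $g$-dual vector $X+g(\zeta_p,X)\zeta_p$ is zero. Since $g$ is nondegenerate, this converts each degeneracy question into a concrete linear equation. Throughout I use the convention that timelike vectors have negative $g$-norm, so that \emph{timelike and unitary} means $g(\zeta_p,\zeta_p)=-1$.

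For the first item, suppose $\overline g_p$ is degenerate, say $\overline g_p(X,\cdot)=0$ with $X\neq0$. By the identity above this forces $X=-g(\zeta_p,X)\zeta_p$, so $X$ is a nonzero multiple of $\zeta_p$; writing $X=\lambda\zeta_p$ and substituting yields $\lambda\big(1+g(\zeta_p,\zeta_p)\big)=0$, hence $g(\zeta_p,\zeta_p)=-1$ and $\zeta_p$ is timelike and unitary. Conversely, if $g(\zeta_p,\zeta_p)=-1$ then directly $\overline g_p(\zeta_p,Y)=g(\zeta_p,Y)\big(1+g(\zeta_p,\zeta_p)\big)=0$ for all $Y$, so $\zeta_p$ lies in the radical and $\overline g_p$ is degenerate.

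For the second item I would first record the degeneracy structure carried by $L$: because $L$ is lightlike, $g$ restricted to $T_pL$ is positive semidefinite with one-dimensional radical $\mathrm{span}(\xi_p)$, i.e. $g(X,X)\ge0$ for $X\in T_pL$ with equality iff $X\in\mathrm{span}(\xi_p)$. Writing $\omega=i^\ast\alpha$, for $X\in T_pL$ one has $\widetilde g_p(X,X)=g(X,X)+\omega(X)^2$, a sum of two nonnegative terms, so $\widetilde g_p$ is always positive semidefinite. It fails to be definite exactly when some $X\neq0$ makes both terms vanish, that is $X\in\mathrm{span}(\xi_p)$ together with $\omega(\xi_p)=g(\zeta_p,\xi_p)=0$. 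Finally, since $\mathrm{span}(\xi_p)=T_pL^\perp$ and $g$ is nondegenerate, $\xi_p^\perp=(T_pL^\perp)^\perp=T_pL$; hence $g(\zeta_p,\xi_p)=0$ is equivalent to $\zeta_p\in T_pL$. Therefore $\widetilde g_p$ is degenerate iff $\zeta_p\in T_pL$, which is the contrapositive of the claim.

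The algebra here is short, so the genuine content is geometric: the step I would treat most carefully is identifying the radical of $g|_{T_pL}$ with $\mathrm{span}(\xi_p)$ and the duality $\xi_p^\perp=T_pL$, since these are exactly what turn the analytic condition $\widetilde g_p(\xi_p,\xi_p)=0$ into the clean tangency statement $\zeta_p\in T_pL$. I would also perform a consistency check between the two items: if $\zeta_p$ is timelike unitary (item 1), then since $T_pL$ admits no timelike vectors it follows that $\zeta_p\notin T_pL$, so $\overline g_p$ may be degenerate on $T_pM$ (with radical $\mathrm{span}(\zeta_p)$, transverse to $T_pL$) while its restriction $\widetilde g_p$ is nonetheless Riemannian.
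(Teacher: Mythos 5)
Your proof is correct. There is nothing to compare it against: the paper states Lemma \ref{lema1} without any proof, treating it as elementary linear algebra, so your write-up supplies exactly the missing verification. Both halves are sound and use the natural route: the identity $\overline{g}_p(X,Y)=g\big(X+g(\zeta_p,X)\zeta_p,\,Y\big)$ reduces degeneracy of $\overline{g}_p$ to the kernel of $X\mapsto X+g(\zeta_p,X)\zeta_p$, which is nontrivial precisely when $1+g(\zeta_p,\zeta_p)=0$; and item 2 rests on the standard facts, already part of the paper's Section 2 framework, that $g|_{T_pL}$ is positive semidefinite with radical $T_pL^{\perp}=\mathrm{span}(\xi_p)$ and that $(T_pL^{\perp})^{\perp}=T_pL$, so failure of definiteness of $\widetilde{g}_p$ is equivalent to $g(\zeta_p,\xi_p)=0$, i.e.\ to $\zeta_p\in T_pL$. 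As a side benefit, your identity also yields the paper's unproved follow-up remark: the map $X\mapsto X+g(\zeta_p,X)\zeta_p$ is the identity on $\zeta_p^{\perp}$ and multiplication by $1+g(\zeta_p,\zeta_p)$ on $\mathrm{span}(\zeta_p)$, which gives immediately that $\overline{g}$ is Riemannian when $\zeta_p$ is timelike with $\vert\zeta_p\vert>1$ and Lorentzian when $\vert\zeta_p\vert<1$.
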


If $\zeta_{p}$ is timelike and $\left\vert \zeta_{p}\right\vert >1$ (resp.
$<1)$, then $\overline{g}_{p}$ is Riemannian (resp. Lorentzian). The
Riemannian metric $\widetilde{g}$ on $L$ can be written as $\widetilde
{g}=i^{\ast}g+\omega\otimes\omega$, and it is clear that we can construct a
Lorentzian metric on $L$ defining $\widetilde{g}=i^{\ast}g-\omega\otimes
\omega$, but we will only consider the Riemannian case in this paper.

Point 2 of the above lemma motivates us to define the following.

\begin{definition}
Let $L$ be a null hypersurface of a Lorentzian manifold. A \emph{rigging} for $L$ is
a vector field $\zeta$ defined on some open set containing $L$ such that
$\zeta_{p}\notin T_{p}L$ for each $p\in L$.
\end{definition}

As far as we know, the term rigging appeared for the first time in \cite{Schou1954} in a Riemannian setting. In \cite{1980Katsuno,1981Katsuno},
the author used the term rigging to refer to a null vector field transverse to a null hypersurface and used it to construct a Riemannian metric, see also \cite{Atindogbe}.

From now on we fix $\zeta$ a rigging for $L$ and we induce a Riemannian metric
$\widetilde{g}$ on $L$ which we call \emph{rigged metric}, as Lemma \ref{lema1} asserts. It also fixes a null
vector field in $L$, as we can see below.

\begin{definition}
The \emph{rigged vector field} of $\zeta$ is the $\widetilde{g}$-metrically
equivalent vector field to the $1$-form $\omega$ and it is denoted by $\xi$.
\end{definition}

\begin{lemma}
\label{lemacaractproyeccion} The rigged vector field $\xi$ is the unique null
vector field in $L$ such that $g(\zeta,\xi)=1$. Moreover, $\xi$ is
$\widetilde{g}$-unitary.
\end{lemma}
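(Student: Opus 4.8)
The plan is to translate the defining property of the rigged vector field $\xi$ — namely $\widetilde{g}(\xi,U)=\omega(U)$ for every $U\in T_pL$ — into a relation involving only $g$, and then exploit the rigging hypothesis $\zeta_p\notin T_pL$. First I would expand $\widetilde{g}(\xi,U)=g(\xi,U)+g(\zeta,\xi)\,g(\zeta,U)$ using $\widetilde{g}=i^{\ast}(g+\alpha\otimes\alpha)$, together with $\omega(U)=g(\zeta,U)$, so that the defining equation becomes
\[
g(\xi,U)=\big(1-g(\zeta,\xi)\big)\,g(\zeta,U)\qquad\text{for all }U\in T_pL.
\]
Equivalently, $\xi-\big(1-g(\zeta,\xi)\big)\zeta$ is $g$-orthogonal to $T_pL$.

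The key step is next. Since $L$ is a lightlike hypersurface, $(T_pL)^{\perp}$ is one-dimensional, lightlike, and contained in $T_pL$, and the induced metric on $T_pL$ is positive semidefinite with this radical as its kernel; in particular the only lightlike tangent vectors are those of $(T_pL)^{\perp}$. The orthogonality above places $\xi-\big(1-g(\zeta,\xi)\big)\zeta$ in $(T_pL)^{\perp}\subset T_pL$, while $\xi\in T_pL$, so $\big(1-g(\zeta,\xi)\big)\zeta\in T_pL$. As $\zeta_p\notin T_pL$, this forces $g(\zeta,\xi)=1$. Substituting back yields $g(\xi,U)=0$ for all $U\in T_pL$, hence $\xi\in(T_pL)^{\perp}$ and $\xi$ is lightlike. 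The $\widetilde{g}$-unitary claim is then immediate from $\widetilde{g}(\xi,\xi)=g(\xi,\xi)+g(\zeta,\xi)^{2}=0+1=1$.

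For uniqueness, I would take any lightlike $\eta\in T_pL$ with $g(\zeta,\eta)=1$. Both $\xi$ and $\eta$ are nonzero lightlike vectors in $T_pL$ (note $\xi\neq0$ is guaranteed by $g(\zeta,\xi)=1$), so by the one-dimensionality of the radical they are proportional, say $\eta=s\,\xi$. Pairing with $\zeta$ gives $1=g(\zeta,\eta)=s\,g(\zeta,\xi)=s$, whence $\eta=\xi$.

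The only genuinely delicate point is recognizing that the rigging hypothesis $\zeta_p\notin T_pL$ is precisely what kills the coefficient $1-g(\zeta,\xi)$ and pins down $g(\zeta,\xi)=1$; once this is in hand, lightlikeness, unitarity, and uniqueness all follow from the standard algebra of the degenerate induced metric and the fact that the lightlike radical of $L$ is one-dimensional. Everything beyond the orthogonality identity is routine.
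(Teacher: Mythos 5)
Your proof is correct, but it is organized differently from the paper's and uses the rigging hypothesis in a different way, so a comparison is worthwhile. The paper tests the defining identity $\widetilde{g}(\xi,\cdot)=\omega$ against a \emph{single} lightlike vector $v\in T_pL$: since such a $v$ spans the radical of the lightlike hyperplane and $\xi\in T_pL$, one has $g(\xi,v)=0$, so the expansion $\widetilde{g}=g+\omega\otimes\omega$ collapses to $\widetilde{g}(\xi,v)=\widetilde{g}(\xi,\xi)\,\widetilde{g}(\xi,v)$; the hypothesis $\zeta_p\notin T_pL$ enters only to guarantee $\widetilde{g}(\xi,v)=g(\zeta,v)\neq 0$, which cancels to give unitarity $\widetilde{g}(\xi,\xi)=1$ \emph{first}, after which the identities $\widetilde{g}(\xi,\xi)=g(\zeta,\xi)$ and $\widetilde{g}(\xi,\xi)=g(\xi,\xi)+\widetilde{g}(\xi,\xi)^{2}$ yield $g(\zeta,\xi)=1$ and lightlikeness. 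You instead test the defining identity against \emph{all} of $T_pL$ and read it as the vector statement $\xi-\bigl(1-g(\zeta,\xi)\bigr)\zeta\in (T_pL)^{\perp}\subset T_pL$, so the rigging hypothesis acts at the vector level to kill the coefficient $1-g(\zeta,\xi)$; this pins down $g(\zeta,\xi)=1$ first, then lightlikeness (because $\xi$ lands in the radical), then unitarity. Both arguments rest on the same two facts, namely the expansion of $\widetilde{g}$ and the radical of a lightlike hyperplane being one-dimensional and contained in the hyperplane, so neither is more general; yours is slightly longer but makes the role of $\zeta_p\notin T_pL$ completely explicit, and it has the additional merit of actually proving the uniqueness assertion of the lemma (via one-dimensionality of the radical plus the normalization), which the paper's proof leaves implicit.
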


\begin{proof}
Take $v\in TL$ a null vector. Since $\zeta_{p}\notin TL$ for each $p\in L$,
using the definitions of $\omega$ and $\widetilde{g}$, we have $\widetilde
{g}(\xi,v)=g(\zeta,v)\neq0$ and $\widetilde{g}(\xi,v)=\widetilde{g}(\xi
,\xi)\widetilde{g}(\xi,v)$, thus $\widetilde{g}(\xi,\xi)=1$. Now, observe that
$\widetilde{g}(\xi,\xi)=g(\zeta,\xi)$ and $\widetilde{g}(\xi,\xi)=g(\xi
,\xi)+\widetilde{g}(\xi,\xi)^{2}$.
\end{proof}

We can consider the screen distribution given by $TL\cap\zeta^{\perp}$, which
we denote by $\mathcal{S}^{\zeta}$ to emphasize that it depends on $\zeta$, and derive all geometrical objects defined
in Section \ref{review}. Observe that $\mathcal{S}^{\zeta}$ is the
$\widetilde{g}$-orthogonal subspace to $\xi$ and the null transverse vector
field to $\mathcal{S}^{\zeta}$ is given by
\[
N=\zeta-\frac{1}{2}g(\zeta,\zeta)\xi.
\]

Using Lemma \ref{lema1}, we could have constructed a Riemannian metric on the
whole $M$ which would induce a Riemannian metric on $L$. However, this
construction is too rigid and it restricts the link between the geometry of
the ambient and the geometry induced on $L$. In this paper, the key point is
not the Riemannian character of the metric $\overline{g}$ on the ambient, but
that of the induced metric $\widetilde{g}$ on $L$. In a similar way we could have defined the rigging vector field $\zeta$ on $L$ instead on an open set containing $L$, but the tuning of both geometries are easier described with hypothesis on the properties of $\zeta$ as a vector field on $M$.

We need the relation between the Levi-Civita connections of both $g$ and
$\widetilde{g}$ acting on vector fields in $\mathfrak{X}(L)$. Call
$\widetilde{\nabla}$ the Levi-Civita connection induced on $L$ by
$\widetilde{g}$ and $D=\nabla-\widetilde{\nabla}$, which is a symmetric tensor
on $\mathfrak{X}(L)$.

\begin{proposition}
\label{diferenciaconexiones} Given $U,V,W\in\mathfrak{X}(L)$, it holds
\[
g(D(U,V),W)=-\frac{1}{2}\left(  \omega(W)(L_{\xi}\widetilde{g})(U,V)+\omega
(U)d\omega(V,W)+\omega(V)d\omega(U,W)\right)  ,
\]
where $L_{\xi}$ is the Lie derivative along $\xi$.
\end{proposition}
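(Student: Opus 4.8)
The plan is to derive the formula by comparing the Koszul formulas of the two Levi-Civita connections $\nabla$ (of $g$ on $M$) and $\widetilde{\nabla}$ (of $\widetilde{g}$ on $L$), both evaluated on fields $U,V,W\in\mathfrak{X}(L)$. Two facts make the comparison clean: first, for vectors tangent to $L$ one has $\widetilde{g}(A,B)=g(A,B)+\omega(A)\omega(B)$, since $\widetilde{g}=i^{\ast}(g+\alpha\otimes\alpha)$ and $\omega=i^{\ast}\alpha$; second, the Lie bracket of two fields tangent to $L$ is again tangent to $L$, so the bracket terms occurring in the two Koszul formulas involve the same tangent fields. Because the Koszul combination is linear in the metric and linear in its derivatives, writing $\Omega=\omega\otimes\omega$ gives $K_{\widetilde{g}}=K_{g}+K_{\Omega}$, where $K_{\Omega}$ denotes the same combination built from $\Omega$ in place of the metric. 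Splitting the left-hand side as $\widetilde{g}(\widetilde{\nabla}_{U}V,W)=g(\widetilde{\nabla}_{U}V,W)+\omega(\widetilde{\nabla}_{U}V)\omega(W)$ then yields an identity of the shape $2g(\widetilde{D}(U,V),W)=2\omega(\widetilde{\nabla}_{U}V)\omega(W)-K_{\Omega}(U,V,W)$.

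Before proceeding I would record the identity that ultimately produces the $L_{\xi}\widetilde{g}$ and $d\omega$ in the statement. Since $\xi$ is the $\widetilde{g}$-dual of $\omega$ and $\widetilde{\nabla}$ is metric and torsion-free, $(\widetilde{\nabla}_{X}\omega)(Y)=\widetilde{g}(\widetilde{\nabla}_{X}\xi,Y)$, and its decomposition into symmetric and antisymmetric parts reads $(\widetilde{\nabla}_{X}\omega)(Y)=\frac{1}{2}(L_{\xi}\widetilde{g})(X,Y)+\frac{1}{2}d\omega(X,Y)$. Here the symmetric part is $L_{\xi}\widetilde{g}$ because $(L_{\xi}\widetilde{g})(X,Y)=(\widetilde{\nabla}_{X}\omega)(Y)+(\widetilde{\nabla}_{Y}\omega)(X)$, and the antisymmetric part is $d\omega$ because $d\omega(X,Y)=(\widetilde{\nabla}_{X}\omega)(Y)-(\widetilde{\nabla}_{Y}\omega)(X)$.

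The main step is to rewrite $K_{\Omega}$ in covariant form. Using that $\widetilde{\nabla}$ is torsion-free, the standard manipulation of the Koszul combination of a symmetric $(0,2)$-tensor gives $K_{\Omega}(U,V,W)=(\widetilde{\nabla}_{U}\Omega)(V,W)+(\widetilde{\nabla}_{V}\Omega)(U,W)-(\widetilde{\nabla}_{W}\Omega)(U,V)+2\Omega(\widetilde{\nabla}_{U}V,W)$. The crucial point, and what I expect to be the only delicate bookkeeping, is that the last term equals $2\omega(\widetilde{\nabla}_{U}V)\omega(W)$ and hence cancels exactly the term $2\omega(\widetilde{\nabla}_{U}V)\omega(W)$ arising from the difference of the two left-hand sides (which carry the two different metrics). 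After this cancellation one is left with $2g(\widetilde{D}(U,V),W)=(\widetilde{\nabla}_{W}\Omega)(U,V)-(\widetilde{\nabla}_{U}\Omega)(V,W)-(\widetilde{\nabla}_{V}\Omega)(U,W)$, an expression free of the unknown connection applied to $U,V$.

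To finish, I would expand $(\widetilde{\nabla}_{X}\Omega)(Y,Z)=(\widetilde{\nabla}_{X}\omega)(Y)\,\omega(Z)+\omega(Y)\,(\widetilde{\nabla}_{X}\omega)(Z)$, substitute the decomposition above, and collect terms according to the factors $\omega(U)$, $\omega(V)$, $\omega(W)$. The differences $(\widetilde{\nabla}_{W}\omega)(U)-(\widetilde{\nabla}_{U}\omega)(W)$ and $(\widetilde{\nabla}_{W}\omega)(V)-(\widetilde{\nabla}_{V}\omega)(W)$ reassemble into $d\omega(U,W)$ and $d\omega(V,W)$, while the symmetric combination $(\widetilde{\nabla}_{U}\omega)(V)+(\widetilde{\nabla}_{V}\omega)(U)$ reassembles into $(L_{\xi}\widetilde{g})(U,V)$; this yields exactly $g(\widetilde{D}(U,V),W)=-\frac{1}{2}\big(\omega(W)(L_{\xi}\widetilde{g})(U,V)+\omega(U)d\omega(V,W)+\omega(V)d\omega(U,W)\big)$. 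The only genuine obstacle is the cancellation of the $\omega(\widetilde{\nabla}_{U}V)\omega(W)$ term described above; the rest is routine tensor bookkeeping.
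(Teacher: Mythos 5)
Your proposal is correct and takes essentially the same route as the paper's proof: both compare the Koszul formulas of $g$ and $\widetilde{g}$ on tangent fields, rewrite the resulting $\omega\otimes\omega$ contribution in terms of $\widetilde{\nabla}$-covariant derivatives (with the $\omega(\widetilde{\nabla}_{U}V)\omega(W)$ terms cancelling), and then use the decomposition of $\widetilde{\nabla}\omega$ into its symmetric part $\frac{1}{2}(L_{\xi}\widetilde{g})$ and antisymmetric part $\frac{1}{2}d\omega$. The only cosmetic difference is that the paper simplifies by assuming the relevant Lie brackets vanish, whereas you carry the bracket terms through the general covariant identity for the Koszul combination.
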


\begin{proof}
We can suppose that the involved Lie brackets vanish. The Koszul identity
leads us to write
\begin{align*}
2g(\nabla_{U}V,W)  &  =2\widetilde{g}(\widetilde{\nabla}_{U}V,W)-\Big(U\left(
\omega\otimes\omega(V,W)\right)  +V\left(  \omega\otimes\omega(U,W)\right) \\
&  -W\left(  \omega\otimes\omega(U,V)\right)  \Big)\\
&  =2g(\widetilde{\nabla}_{U}V,W)\\
&  -\left(  \widetilde{\nabla}_{U}(\omega\otimes\omega)(V,W)+\widetilde
{\nabla}_{V}(\omega\otimes\omega)(U,W)-\widetilde{\nabla}_{W}(\omega
\otimes\omega)(U,V)\right)  .
\end{align*}

Now, just take into account that $d\omega(U,V) =\left(  \widetilde{\nabla}%
_{U}\omega\right)  (V)-\left(  \widetilde{\nabla}_{V}\omega\right)  (U)$ and
$(L_{\xi}\widetilde{g})(U,V) =\left(  \widetilde{\nabla}_{U}\omega\right)
(V)+\left(  \widetilde{\nabla}_{V}\omega\right)  (U)$.
\end{proof}

Now, we take $D^{L}=\nabla^{L}-\widetilde{\nabla}$, which is also symmetric
and holds $D-D^{L}=B\cdot N$. Therefore
\begin{align}
g(D^{L}(U,V),W)  &  =-\frac{1}{2}\left(  \omega(W)(L_{\xi}\widetilde
{g})(U,V)+\omega(U)d\omega(V,W)+\omega(V)d\omega(U,W)\right) \nonumber\\
&  -B(U,V)\omega(W). \label{tensordiferencia}%
\end{align}

The fact that both $\nabla^{L}$ and $\widetilde{\nabla}$ are connections on
$L$ makes the computations in the next section easier with $D^{L}$ instead of
$D$. The following basic identities holds.

\begin{corollary}
\label{corformula}Given $U\in\mathfrak{X}(L)$ and $X,Y,Z\in\mathcal{S}^{\zeta
}$, we have the following.

\begin{enumerate}
\item $\widetilde{g}(D^{L}(X,U),X)=g(D^{L}(X,U),X)=0$.

\item $\widetilde{g}(D^{L}(X,Y),Z)=g(D^{L}(X,Y),Z)=0$.

\item $\widetilde{g}(D^{L}(U,\xi),\xi)=-\tau(U)=-g(\nabla_{U}\zeta,\xi)$.

\item $-2C(U,X)=d\alpha(U,X)+\left(  L_{\zeta}g\right)  (U,X)+g(\zeta
,\zeta)B(U,X)$.
\end{enumerate}
\end{corollary}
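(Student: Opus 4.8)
The plan is to handle the four identities with three different mechanisms: parts 1 and 2 by brute substitution into the difference-tensor formula (\ref{tensordiferencia}), part 3 by returning to the definition $D=\nabla^{L}-\widetilde{\nabla}$, and part 4 by unwinding the definition of $C$ together with the explicit expression $N=\zeta-\frac{1}{2}g(\zeta,\zeta)\xi$. The one preliminary fact that powers almost everything is that $\omega$ annihilates the screen: for $X\in SL(\zeta)=TL\cap\zeta^{\perp}$ we have $\omega(X)=g(\zeta,X)=0$, whereas $\omega(\xi)=g(\zeta,\xi)=\widetilde{g}(\xi,\xi)=1$ by Lemma \ref{lemacaractproyeccion}. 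I also record $\widetilde{g}=g+\omega\otimes\omega$ on $TL$, so the gap between $\widetilde{g}$ and $g$ is always a product of two $\omega$-factors.

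For parts 1 and 2 I would substitute $(U,V,W)=(X,U,X)$ and $(X,Y,Z)$ respectively into (\ref{tensordiferencia}). In every summand on the right-hand side either the slot $W$ or an argument of $D$ is a screen vector, so each carries a factor $\omega(\cdot)$ evaluated on $SL(\zeta)$ and hence vanishes; the only exception, $\omega(U)\,d\omega(X,X)$, dies by antisymmetry of $d\omega$. This gives $g(D(X,U),X)=0$ and $g(D(X,Y),Z)=0$. The $\widetilde{g}$-versions then come for free, since the correction $\omega\otimes\omega$ again meets a screen vector ($\omega(X)=0$, $\omega(Z)=0$) in its last slot.

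For part 3 the $g$-identity is automatic (every $g(\,\cdot\,,\xi)$ vanishes on $TL$ because $\xi$ spans the radical), so the content is the $\widetilde{g}$-version, which I would compute directly as $\widetilde{g}(D(U,\xi),\xi)=\widetilde{g}(\nabla^{L}_{U}\xi,\xi)-\widetilde{g}(\widetilde{\nabla}_{U}\xi,\xi)$. The second term is $\tfrac12 U\big(\widetilde{g}(\xi,\xi)\big)=0$ since $\xi$ is $\widetilde{g}$-unitary; for the first I use $B(U,\xi)=0$ to get $\nabla^{L}_{U}\xi=\nabla_{U}\xi=-\tau(U)\xi-A^{\ast}(U)$, and pairing with $\xi$ while using $\widetilde{g}(A^{\ast}(U),\xi)=\omega(A^{\ast}(U))=0$ (as $A^{\ast}(U)\in SL(\zeta)$) yields $-\tau(U)$. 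The remaining equality $\tau(U)=g(\nabla_{U}\zeta,\xi)$ I would prove by differentiating the constant $g(N,\xi)=1$: this gives $g(\nabla_{U}N,\xi)=-g(N,\nabla_{U}\xi)=\tau(U)$ using $g(N,SL(\zeta))=0$, and then substituting $N=\zeta-\frac12 g(\zeta,\zeta)\xi$ and invoking $g(\xi,\xi)=0$, $g(\nabla_{U}\xi,\xi)=0$ collapses $g(\nabla_{U}N,\xi)$ to $g(\nabla_{U}\zeta,\xi)$.

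For part 4 I would expand $C(U,X)=-g(\nabla_{U}N,X)$ with the same $N$. The $\xi$-term contributes $-\frac12 g(\zeta,\zeta)g(\nabla_{U}\xi,X)=+\frac12 g(\zeta,\zeta)B(U,X)$ (using $B(U,X)=-g(\nabla_{U}\xi,X)$ and $g(\xi,X)=0$), leaving $C(U,X)=-g(\nabla_{U}\zeta,X)-\frac12 g(\zeta,\zeta)B(U,X)$; then the standard identities $d\alpha(U,X)=g(\nabla_{U}\zeta,X)-g(\nabla_{X}\zeta,U)$ and $(L_{\zeta}g)(U,X)=g(\nabla_{U}\zeta,X)+g(\nabla_{X}\zeta,U)$, whose sum is $2g(\nabla_{U}\zeta,X)$, rewrite $-2C(U,X)$ in the stated form. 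I expect the only genuinely delicate point to be the identification $\tau(U)=g(\nabla_{U}\zeta,\xi)$ in part 3, where one must keep careful track of which metric ($g$ or $\widetilde{g}$) and which vector ($N$ or $\zeta$) is in play; everything else is bookkeeping driven by the single fact $\omega|_{SL(\zeta)}=0$.
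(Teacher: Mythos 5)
Your proposal is correct, and it follows exactly the derivation the paper leaves implicit: parts 1 and 2 by substituting screen vectors into Formula (\ref{tensordiferencia}) and using $\omega|_{SL(\zeta)}=0$ together with the antisymmetry of $d\omega$, part 3 from the definition $D=\nabla^{L}-\widetilde{\nabla}$ with $B(U,\xi)=0$, $\widetilde{g}(\xi,\xi)=1$ and $N=\zeta-\frac{1}{2}g(\zeta,\zeta)\xi$, and part 4 by expanding $C(U,X)=-g(\nabla_{U}N,X)$ with the same expression for $N$. All sign conventions ($d\alpha$, $L_{\zeta}g$, $B(U,V)=-g(\nabla_{U}\xi,V)$) are handled consistently with the paper, so there is nothing to correct.
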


Now, we relate the null mean curvature of the null hypersurface $L$ with
$\widetilde{g}$.

\begin{proposition}
\label{derivadadeLie} Take $X,Y,Z\in\mathcal{S}^{\zeta}$. It holds

\begin{enumerate}
\item $\widetilde{\nabla}_{X}Y=\nabla_{X}^{\ast}Y-\widetilde{g}(\widetilde
{\nabla}_{X}\xi,Y)\xi$, thus $\widetilde{g}(\widetilde{\nabla}_{X}%
Y,Z)=g(\nabla_{X}Y,Z)$.

\item $\left(  L_{\xi}\widetilde{g}\right)  (X,Y)=-2B(X,Y)$. In particular
$H=-\widetilde{\operatorname{div}}\xi$.
\end{enumerate}
\end{proposition}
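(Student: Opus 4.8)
The plan is to exploit two structural facts repeatedly. First, the splitting $TL=SL(\zeta)\oplus\langle\xi\rangle$ is $\widetilde{g}$-orthogonal and $\xi$ is $\widetilde{g}$-unitary (Lemma \ref{lemacaractproyeccion}), so every tangent field admits a clean decomposition into its $SL(\zeta)$-part and its $\xi$-part. Second, on $SL(\zeta)=TL\cap\zeta^{\perp}$ the one-form $\omega$ vanishes, whence $\widetilde{g}$ and $g$ agree there; moreover $\xi$ is $g$-orthogonal to all of $TL$ (it spans the $g$-radical) and $N$ is $g$-orthogonal to $SL(\zeta)$. These are precisely the facts that make the $\omega$- and $d\omega$-dependent correction terms disappear.

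For item 1, I would compute the two components of $\widetilde{\nabla}_X Y$ separately. The $\xi$-component is obtained by differentiating $\widetilde{g}(Y,\xi)=0$ and using that $\widetilde{\nabla}$ is metric: this gives $\widetilde{g}(\widetilde{\nabla}_X Y,\xi)=-\widetilde{g}(\widetilde{\nabla}_X\xi,Y)$, which is exactly the claimed coefficient of $\xi$. For the $SL(\zeta)$-component I would write $\widetilde{\nabla}_X Y=\nabla^{L}_X Y-D(X,Y)=\nabla^{*}_X Y+C(X,Y)\xi-D(X,Y)$ via (\ref{descomp2}) and the definition of $D$, pair with an arbitrary $Z\in SL(\zeta)$, and kill the last two terms using $\widetilde{g}(\xi,Z)=0$ and Corollary \ref{corformula}(2). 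Since $\widetilde{g}$ is nondegenerate on $SL(\zeta)$, this identifies the $SL(\zeta)$-part as $\nabla^{*}_X Y$, giving the formula. The ``thus'' is then immediate: pairing the formula with $Z\in SL(\zeta)$ leaves $\widetilde{g}(\nabla^{*}_X Y,Z)=g(\nabla^{*}_X Y,Z)$, and expanding $\nabla_X Y$ by (\ref{descomp1})--(\ref{descomp2}) shows its $\xi$- and $N$-components are $g$-orthogonal to $Z$, so $g(\nabla^{*}_X Y,Z)=g(\nabla_X Y,Z)$.

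For item 2 the cleanest route is to read $(L_\xi\widetilde{g})(X,Y)$ off the difference-tensor formula (\ref{tensordiferencia}) rather than to symmetrise Koszul terms. Evaluating (\ref{tensordiferencia}) at $U=X$, $V=Y$, $W=\xi$ and using $\omega(X)=\omega(Y)=0$, $\omega(\xi)=1$, all the $d\omega$ contributions drop and one is left with $g(D(X,Y),\xi)=-\tfrac{1}{2}(L_\xi\widetilde{g})(X,Y)-B(X,Y)$. The key observation is that $D(X,Y)=\nabla^{L}_X Y-\widetilde{\nabla}_X Y$ lies in $TL$, while $\xi$ is $g$-orthogonal to $TL$; hence the left-hand side vanishes and $(L_\xi\widetilde{g})(X,Y)=-2B(X,Y)$. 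Finally, tracing over a $\widetilde{g}$-orthonormal frame $\{\xi,e_3,\dots,e_n\}$ of $TL$, and noting $(L_\xi\widetilde{g})(\xi,\xi)=\xi\big(\widetilde{g}(\xi,\xi)\big)=0$ together with $\widetilde{\operatorname{div}}\xi=\tfrac{1}{2}\operatorname{tr}_{\widetilde{g}}(L_\xi\widetilde{g})$, yields $H=-\widetilde{\operatorname{div}}\xi$.

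The computations themselves are routine; the one point requiring care is to never conflate $g$ with $\widetilde{g}$, since they coincide only on $SL(\zeta)$, and to keep track of which fields lie in $SL(\zeta)$, in $\langle\xi\rangle$, or in the $g$-radical $TL^{\perp}$. The genuinely clever step, which I expect to be the crux, is the vanishing $g(D(X,Y),\xi)=0$ in item 2: it is this orthogonality, $D(X,Y)\in TL$ against $\xi\in TL^{\perp}$, that eliminates the antisymmetric $d\omega$ term for free and collapses (\ref{tensordiferencia}) directly onto the desired identity.
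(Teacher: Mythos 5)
Your proposal is correct and takes essentially the same route as the paper: item 1 is the paper's argument (Corollary \ref{corformula}(2) to kill the $SL(\zeta)$-part of $D(X,Y)$, the decomposition (\ref{descomp2}), and metricity of $\widetilde{\nabla}$ to compute the $\xi$-coefficient), and item 2 is exactly the paper's one-line evaluation of Formula (\ref{tensordiferencia}) at $U=X$, $V=Y$, $W=\xi$, with the step the paper leaves implicit, namely $g(D(X,Y),\xi)=0$ because $D(X,Y)\in TL$ and $\xi$ spans the $g$-radical of $TL$, correctly made explicit.
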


\begin{proof}
From the above corollary $D^{L}(X,Y)=a\xi$ for certain $a$. Thus
$\widetilde{\nabla}_{X}Y=\nabla_{X}^{\ast}Y+\left(  C(X,Y)-a\right)  \xi$, but
$C(X,Y)-a=\widetilde{g}(\widetilde{\nabla}_{X}Y,\xi)=-\widetilde{g}%
(\widetilde{\nabla}_{X}\xi,Y)$. Second point follows taking $U=X,V=Y$ and
$W=\xi$ in Formula (\ref{tensordiferencia}).
\end{proof}

\begin{remark}
\label{remark2}From the first point of the above proposition, for all
$X,Y\in\mathcal{S}^{\zeta}$,
\begin{equation}
D^{L}(X,Y)=\left(  C(X,Y)+\widetilde{g}(\widetilde{\nabla}_{X}\xi,Y)\right)
\xi. \label{remarkD}%
\end{equation}

On the other hand, from the second point, $L$ is totally geodesic if and only
if $\xi$ is $\widetilde{g}$-orthogonally Killing and it is totally umbilic if
and only if $\xi$ is $\widetilde{g}$-orthogonally conformal.
\end{remark}

\begin{remark}
Suppose that $M$ is orientable and $L$ is a compact null hypersurface. Then
$L$ is also orientable, and point 2 above implies $\int_{L}Hd\widetilde{g}=0$.
In particular, its null mean curvature vanishes somewhere. This fact is a
remarkable difference with respect to non-null hypersurfaces.
\end{remark}

A classical question is to find conditions on a null hypersurface to ensure
that the choosen null section on it can be rescaled to be geodesic, see for
example \cite{Kup87-2}. It can always be done locally, but in general it is not possible to do it globally as it can be checked in
$\mathbb{T}\times\mathbb{R}$, where $\mathbb{T}$ is the Clifton-Pohl torus.
The following lemma shows that with our approach this
question is naturally answered giving a condition in terms of the rigging
vector field.

\begin{lemma}
\label{campoU} Let $L$ be a null hypersurface and $\zeta$ a rigging for it. If
$\zeta$ is conformal, then $\nabla_{\xi}\xi=0$, that is $\tau(\xi)=0$. Moreover
$\tau(X)=-\frac{1}{2}g(\widetilde{\nabla}_{\xi}\xi,X)$ for all $X\in
\mathcal{S}^{\zeta}$.
\end{lemma}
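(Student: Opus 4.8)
The plan is to reduce everything to the identity $\tau(U)=g(\nabla_{U}\zeta,\xi)$ furnished by the third item of Corollary \ref{corformula}, and to feed in the conformal hypothesis in the form $L_{\zeta}g=2\phi g$, i.e. $g(\nabla_{U}\zeta,V)+g(\nabla_{V}\zeta,U)=2\phi\,g(U,V)$. For the first assertion I would simply set $U=V=\xi$ in this identity; since $\xi$ is lightlike, $g(\xi,\xi)=0$, so $2g(\nabla_{\xi}\zeta,\xi)=2\phi\,g(\xi,\xi)=0$, whence $\tau(\xi)=g(\nabla_{\xi}\zeta,\xi)=0$. Recalling from Section \ref{review} that $\nabla_{\xi}\xi=-\tau(\xi)\xi$, this gives at once $\nabla_{\xi}\xi=0$.

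For the second assertion I first rewrite $\tau(X)$ for $X\in SL(\zeta)$. Starting from $\tau(X)=g(\nabla_{X}\zeta,\xi)$ and using the conformal identity with $U=X$, $V=\xi$ together with $g(\xi,X)=0$ (valid because $\xi\in TL^{\perp}$), I obtain $\tau(X)=-g(\nabla_{\xi}\zeta,X)$. It then remains to show $g(\widetilde{\nabla}_{\xi}\xi,X)=2g(\nabla_{\xi}\zeta,X)$. To compute $\widetilde{\nabla}_{\xi}\xi$ I would pass through the difference tensor $D=\nabla^{L}-\widetilde{\nabla}$: since $B(\xi,\cdot)=0$ one has $\nabla^{L}_{\xi}\xi=\nabla_{\xi}\xi=0$ by the first part, so $\widetilde{\nabla}_{\xi}\xi=-D(\xi,\xi)$. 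Evaluating the explicit formula (\ref{tensordiferencia}) at $U=V=\xi$, $W=X$, and using $\omega(\xi)=g(\zeta,\xi)=1$, $\omega(X)=g(\zeta,X)=0$ and $B(\xi,\xi)=0$, all terms collapse except the two $d\omega$ contributions, yielding $g(D(\xi,\xi),X)=-d\omega(\xi,X)$ and hence $g(\widetilde{\nabla}_{\xi}\xi,X)=d\omega(\xi,X)$.

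Finally I would translate $d\omega(\xi,X)$ back into ambient data: writing $d\omega=i^{\ast}d\alpha$ and $d\alpha(\xi,X)=g(\nabla_{\xi}\zeta,X)-g(\nabla_{X}\zeta,\xi)$, the conformal identity (again with $g(\xi,X)=0$) turns this into $d\omega(\xi,X)=2g(\nabla_{\xi}\zeta,X)$. Combining the displays gives $\tau(X)=-g(\nabla_{\xi}\zeta,X)=-\tfrac{1}{2}g(\widetilde{\nabla}_{\xi}\xi,X)$, as claimed. The one delicate point — the main bookkeeping obstacle — is the evaluation of (\ref{tensordiferencia}): one must correctly see that the $\omega(X)(L_{\xi}\widetilde{g})(\xi,\xi)$ term and the $B(\xi,\xi)\omega(X)$ term both drop out (the first because $\omega(X)=0$, the second because $B(\xi,\xi)=0$), so that only the antisymmetric $d\omega$ part survives; a sign slip or a missed surviving term there would corrupt the final factor $-\tfrac{1}{2}$.
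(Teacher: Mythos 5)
Your proof is correct and follows essentially the same route as the paper: establish $\tau(\xi)=g(\nabla_{\xi}\zeta,\xi)=0$ from Corollary \ref{corformula} and conformality, then compute $\widetilde{\nabla}_{\xi}\xi$ via the difference-tensor formula, reduce to $d\omega(\xi,X)=d\alpha(\xi,X)$, and use the conformal identity once more. The only cosmetic difference is that you work with $D=\nabla^{L}-\widetilde{\nabla}$ and Formula (\ref{tensordiferencia}) where the paper uses $\widetilde{D}=\nabla-\widetilde{\nabla}$ and Proposition \ref{diferenciaconexiones}; since $\widetilde{D}-D=B\cdot N$ and $B(\xi,\xi)=0$, the two computations coincide.
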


\begin{proof}
From Corollary \ref{corformula}, $\tau(\xi)=g(\nabla_{\xi}\zeta,\xi)=0$. On
the other hand, Proposition \ref{diferenciaconexiones} and Corolary
\ref{corformula} implies
\begin{align*}
g(\widetilde{\nabla}_{\xi}\xi,X)  &  =-g(D(\xi,\xi),X)=d\omega(\xi
,X)=d\alpha(\xi,X)=-2g(\xi,\nabla_{X}\zeta)\\
&  =-2\tau(X).
\end{align*}

\end{proof}

The above lemma allows us to state the following.

\begin{proposition} \label{Prop1}
Let $M$ be a null complete Lorentzian manifold furnished with a timelike
conformal vector field. If $Ric(u)>0$ for all null vector $u\in TM$,
then it cannot exist any  closed (in the topological sense) embedded null hypersurface.
\end{proposition}
\begin{proof}
Suppose that $L$ is a closed embedded null hypersurface. Since the rigged vector field
$\xi$ is geodesic (Lemma \ref{campoU}) and $M$ is null complete, it follows
that $\xi$ is a complete vector field. From the well-known Raychaudhuri equation (see for example
\cite[Proposition 2]{2005Duggal})
\begin{equation*}
 Ric(\xi)=\xi(H)-|A^*|^2
\end{equation*}
  and the inequality $\frac{1}{n-2}\left(tr A^*\right)^2\leq|A^*|^2$, where $|A^*|$ is the norm of $A^*$, we have
  $0< \xi(H)-\frac{H^2}{n-2}$. Since  $\xi$ is complete,
it follows that $H=0$ which is a contradiction.
\end{proof}

\begin{example}
\label{ejemploBerger} Using this proposition, the Lorentzian Berger sphere
$(\mathbb{S}^{2n+1},g_{L})$ does not admit any closed embedded null hypersurface. In
fact, it is a complete Lorentzian manifold, $Ric(u)>0$ for all null
vector $u\in TM$  and the Hopf vector field is timelike and Killing, \cite{Gut2}.
\end{example}

If we take a closed rigging $\zeta$, its rigged vector field $\xi$ is also
closed, so the screen distribution $\mathcal{S}^{\zeta}$ is integrable. We
call $\widetilde{\mathbb{I}}$ the second fundamental form of $(\mathcal{S}%
^{\zeta},\widetilde{g})$ inside $(L,\widetilde{g})$.

\begin{proposition}
\label{segundaformafund}Let $L$ be a null hypersurface and $\zeta$ a closed
rigging for it with rigged vector field $\xi$. Given $X,Y\in\mathcal{S}%
^{\zeta}$ and $U\in\mathfrak{X}(L)$ it holds
\begin{align*}
\widetilde{\nabla}_{X}Y  &  =\nabla_{X}^{\ast}Y+B(X,Y)\xi,\\
\widetilde{\nabla}_{U}\xi &  =-A^{\ast}(U).
\end{align*}

In particular, $\widetilde{\nabla}_{\xi}\xi=0$ and $\widetilde{\mathbb{I}%
}(X,Y)=B(X,Y)\xi$.
\end{proposition}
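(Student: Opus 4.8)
The plan is to work entirely through the difference tensor $D=\nabla^{L}-\widetilde{\nabla}$ of Equation (\ref{tensordiferencia}), exploiting that a closed rigging forces $d\omega=0$ (indeed $\omega=i^{\ast}\alpha$ with $\alpha$ closed, so $d\omega=i^{\ast}d\alpha=0$). This collapses (\ref{tensordiferencia}) to $g(D(U,V),W)=-\tfrac{1}{2}\omega(W)(L_{\xi}\widetilde{g})(U,V)-B(U,V)\omega(W)$, which is what makes every computation below succeed. I would prove the second identity $\widetilde{\nabla}_{U}\xi=-A^{\ast}(U)$ first, and then read off the remaining three assertions from it.

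The heart of the argument is to show $D(U,\xi)=-\tau(U)\xi$ for every $U\in\mathfrak{X}(L)$. First I would record $\widetilde{\nabla}_{\xi}\xi=0$: since $\xi$ is $\widetilde{g}$-unitary (Lemma \ref{lemacaractproyeccion}) one has $(L_{\xi}\widetilde{g})(\xi,\xi)=\xi(\widetilde{g}(\xi,\xi))=0$ and $B(\xi,\xi)=0$, so the simplified formula gives $g(D(\xi,\xi),W)=0$ for all $W\in TL$. As $g|_{TL}$ is degenerate exactly along $\mathbb{R}\xi$, this forces $D(\xi,\xi)\in\mathbb{R}\xi$, and Corollary \ref{corformula}(3) fixes $D(\xi,\xi)=-\tau(\xi)\xi$; combined with $\nabla^{L}_{\xi}\xi=-\tau(\xi)\xi$ (from (\ref{descomp1}), $B(\cdot,\xi)=0$ and $A^{\ast}(\xi)=0$) the two terms cancel and $\widetilde{\nabla}_{\xi}\xi=0$. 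Having this, $(L_{\xi}\widetilde{g})(U,\xi)=\widetilde{g}(\widetilde{\nabla}_{U}\xi,\xi)+\widetilde{g}(\widetilde{\nabla}_{\xi}\xi,U)=0$ for all $U$ (the first term is $\tfrac{1}{2}U(\widetilde{g}(\xi,\xi))=0$), and the same radical argument together with Corollary \ref{corformula}(3) yields $D(U,\xi)=-\tau(U)\xi$ in general.

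From here the identities follow by bookkeeping. Writing $\widetilde{\nabla}_{U}\xi=\nabla^{L}_{U}\xi-D(U,\xi)$ and using $\nabla^{L}_{U}\xi=-\tau(U)\xi-A^{\ast}(U)$, the $\tau(U)\xi$ terms cancel and leave $\widetilde{\nabla}_{U}\xi=-A^{\ast}(U)$; in particular $\widetilde{\nabla}_{\xi}\xi=-A^{\ast}(\xi)=0$ since $A^{\ast}(\xi)=0$. For the first formula I would feed this into Proposition \ref{derivadadeLie}(1): for $X,Y\in SL(\zeta)$ one has $\widetilde{g}(\widetilde{\nabla}_{X}\xi,Y)=-\widetilde{g}(A^{\ast}(X),Y)=-B(X,Y)$ (because $\widetilde{g}=g$ on $SL(\zeta)$ and $B(X,Y)=g(A^{\ast}(X),Y)$), so $\widetilde{\nabla}_{X}Y=\nabla^{\ast}_{X}Y+B(X,Y)\xi$. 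Finally, since $\zeta$ closed makes $SL(\zeta)$ integrable with $\widetilde{g}$-unit normal $\xi$, the second fundamental form is the $\xi$-component of $\widetilde{\nabla}_{X}Y$, namely $\widetilde{\mathbb{I}}(X,Y)=\widetilde{g}(\widetilde{\nabla}_{X}Y,\xi)\xi=B(X,Y)\xi$.

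The main obstacle is organizational rather than computational: one must establish $\widetilde{\nabla}_{\xi}\xi=0$ (equivalently $(L_{\xi}\widetilde{g})(U,\xi)=0$) before it can be used in evaluating $D(U,\xi)$, so I would take care to derive the $U=\xi$ case from $\widetilde{g}$-unitarity of $\xi$ alone, avoiding any circular dependence on the general formula. The only genuinely Lorentzian subtlety is the repeated use of the degeneracy of $g|_{TL}$ along $\mathbb{R}\xi$ to conclude that a vector which is $g$-orthogonal to all of $TL$ must be a multiple of $\xi$, whose coefficient is then detected by pairing against $\widetilde{g}$.
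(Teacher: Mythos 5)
Your proof is correct, but it takes a genuinely different route from the paper's. The paper's own proof is three lines: since $\zeta$ is closed, $\omega=i^{\ast}\alpha$ is closed, so $\xi$ is a closed $\widetilde{g}$-unit vector field on $(L,\widetilde{g})$; this immediately gives $\widetilde{\nabla}_{\xi}\xi=0$ and the self-adjointness of $\widetilde{\nabla}\xi$, whence $(L_{\xi}\widetilde{g})(X,Y)=2\widetilde{g}(\widetilde{\nabla}_{X}\xi,Y)$; comparing with Proposition \ref{derivadadeLie}(2), which says $(L_{\xi}\widetilde{g})(X,Y)=-2B(X,Y)$, yields $\widetilde{g}(\widetilde{\nabla}_{X}\xi,Y)=-B(X,Y)=-g(A^{\ast}(X),Y)$, and both displayed formulas follow. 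You never invoke this self-adjointness; instead you pin down the difference tensor $D(U,\xi)=-\tau(U)\xi$ from Formula (\ref{tensordiferencia}) with $d\omega=0$, the degeneracy of $g|_{TL}$ along $\mathbb{R}\xi$, and Corollary \ref{corformula}(3), and then subtract from $\nabla^{L}_{U}\xi=-\tau(U)\xi-A^{\ast}(U)$. In effect you prove point 1 of Lemma \ref{lemaD} first and derive the proposition from it --- the reverse of the paper's logical order, since the paper later proves Lemma \ref{lemaD} using this very proposition; your careful ordering (the $U=\xi$ case from $\widetilde{g}$-unitarity and $B(\xi,\cdot)=0$ alone, then $(L_{\xi}\widetilde{g})(\cdot,\xi)=0$, then general $U$) keeps this free of circularity, and the radical argument you use repeatedly (a vector in $TL$ that is $g$-orthogonal to all of $TL$ lies in $\mathbb{R}\xi$) is valid for a lightlike hypersurface. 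What the paper's route buys is brevity and a conceptual point: closed unit vector fields in a Riemannian manifold are geodesic with symmetric covariant derivative, which is exactly what makes a closed rigging advantageous. What your route buys is that the identity $D(U,\xi)=-\tau(U)\xi$ comes out as a byproduct (the paper must re-derive it later), and the argument leans only on the general machinery of Section 3 --- Formula (\ref{tensordiferencia}) and Corollary \ref{corformula} --- rather than on any special fact about closed vector fields.
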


\begin{proof}
Being $\xi$ closed and unitary, $\widetilde{\nabla}_{\xi}\xi=0$ and $\left(
L_{\xi}\widetilde{g}\right)  (X,Y)=2\widetilde{g}(\widetilde{\nabla}_{X}%
\xi,Y)$. From Proposition \ref{derivadadeLie}, we have $B(X,Y)=-\widetilde
{g}(\widetilde{\nabla}_{X}\xi,Y)$ and $\widetilde{\nabla}_{X}Y=\nabla
_{X}^{\ast}Y+B(X,Y)\xi$. Moreover, since $B(X,Y)=g(A^{\ast}(X),Y)$, it follows
$\widetilde{\nabla}_{X}\xi=-A^{\ast}(X)$.
\end{proof}

An inmediate consequence of the above proposition is the following.

\begin{corollary}
\label{xi paralelo}Let $L$ be a null hypersurface and $\zeta$ a closed rigging
for it.

\begin{enumerate}
\item $L$ is totally geodesic if and only if the rigged vector field $\xi$ is
$\widetilde{g}$-parallel.

\item $L$ is totally geodesic (resp. umbilic) if and only if each leaf of
$\mathcal{S}^{\zeta}$ is totally geodesic (resp. umbilic) as a hypersurface of
$(L,\widetilde{g})$.
\end{enumerate}
\end{corollary}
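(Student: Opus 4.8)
The plan is to obtain both equivalences directly from Proposition \ref{segundaformafund}, which already computes $\widetilde{\nabla}_U\xi$ and the second fundamental form $\widetilde{\mathbb{I}}$ of the leaves of $SL(\zeta)$; the two statements are then merely dictionary translations between conditions on $B$ and the stated geometric conditions. Throughout I would use the review-section identities $B(U,V)=g(A^{\ast}(U),V)$ and $B(\xi,\cdot)=0$, together with the facts that $A^{\ast}(U)\in SL(\zeta)$ and that $g$ restricts to a positive-definite form on $SL(\zeta)$.

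For the first point I would begin with the formula $\widetilde{\nabla}_U\xi=-A^{\ast}(U)$. Hence $\xi$ is $\widetilde{g}$-parallel exactly when $A^{\ast}\equiv 0$. Pairing $A^{\ast}(U)$ against $SL(\zeta)$ and using nondegeneracy of $g$ there, $A^{\ast}\equiv 0$ is equivalent to $B(U,X)=0$ for all $U\in\mathfrak{X}(L)$ and $X\in SL(\zeta)$; since $B(\xi,\cdot)=0$, this is in turn the same as $B\equiv 0$, i.e. $L$ totally geodesic.

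For the second point the key remark is that $\xi$ is the $\widetilde{g}$-unit normal to each leaf, being $\widetilde{g}$-unitary (Lemma \ref{lemacaractproyeccion}) and $\widetilde{g}$-orthogonal to $SL(\zeta)$. Thus Proposition \ref{segundaformafund} identifies the scalar second fundamental form of a leaf with $(X,Y)\mapsto B(X,Y)$. A leaf is then totally geodesic iff $B$ vanishes on $SL(\zeta)$, which as above is equivalent to $B\equiv 0$. For the umbilic case, a leaf is totally umbilic iff $B(X,Y)=\rho\,\widetilde{g}(X,Y)$ on $SL(\zeta)$ for a function $\rho$; here I would observe that $\omega$ vanishes on $SL(\zeta)=TL\cap\zeta^{\perp}$, so $\widetilde{g}=g$ there, and combine this with $B(\xi,\cdot)=0=g(\xi,\cdot)|_{TL}$ to conclude that the leafwise umbilic condition is equivalent to $B=\rho g$ on all of $TL$, that is, $L$ totally umbilic.

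No step presents a genuine obstacle, the result being a direct corollary of Proposition \ref{segundaformafund}. The only point meriting care is the umbilic equivalence: one must check that $\widetilde{g}=g$ on $SL(\zeta)$ and that the factor $\rho$, defined a priori leaf by leaf, assembles into a single smooth function on $L$, which it does since pointwise $\rho=B(X,X)/\widetilde{g}(X,X)$ for any nonzero $X\in SL(\zeta)$.
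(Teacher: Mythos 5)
Your proposal is correct and follows exactly the route the paper intends: the paper states this corollary as an immediate consequence of Proposition \ref{segundaformafund}, and your argument simply spells out the dictionary between $B$, $A^{\ast}$, and the leaf geometry (including the useful observations that $\widetilde{g}=g$ on $SL(\zeta)$ and that $B(\xi,\cdot)=0$ makes vanishing or umbilicity on the screen equivalent to the corresponding condition on all of $TL$). No discrepancy with the paper's proof; yours is just the fully detailed version of it.
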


Compare point 1 and 2 above with Remark \ref{remark2}. On the other hand, in general, if the leaves of an integrable screen distribution are
totally umbilic in $(M,g)$, then $L$ is totally umbilic, see Equation (\ref{Eq3}). The converse does not hold and this lack of symmetry hide the geometric
meaning of umbilicity in the null case. However, the converse does hold in $(L,\widetilde{g})$, which suggests the convenience of the rigging construction.

\begin{proposition}
\label{Nablas} Let $L$ be a null hypersurface and $\zeta$ a closed rigging for
it. Then $\widetilde{g}(\widetilde{\nabla}_{U}V,W)=g(\nabla_{U}V,W)+\omega
(W)U(\omega(V))$ for all $U,V,W\in\mathfrak{X}(L)$.
\end{proposition}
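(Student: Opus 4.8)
The plan is to work directly with the difference tensor $\widetilde{D}=\nabla-\widetilde{\nabla}$, which is already controlled by Proposition \ref{diferenciaconexiones}, and to translate the ambient inner product into the Riemannian one through the pointwise identity $\widetilde{g}(A,B)=g(A,B)+\omega(A)\omega(B)$ valid for $A,B\in T_{p}L$. Writing $\widetilde{\nabla}_{U}V=\nabla_{U}V-\widetilde{D}(U,V)$ and applying this identity with $A=\widetilde{\nabla}_{U}V$ and $B=W$, I would first obtain
\[
\widetilde{g}(\widetilde{\nabla}_{U}V,W)=g(\nabla_{U}V,W)-g(\widetilde{D}(U,V),W)+\omega(\widetilde{\nabla}_{U}V)\,\omega(W).
\]
Thus the whole problem reduces to evaluating the last two correction terms.

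For the middle term I would invoke closedness. Because $\zeta$ is closed its metric equivalent $\alpha$ is a closed $1$-form, hence $\omega=i^{\ast}\alpha$ satisfies $d\omega=i^{\ast}d\alpha=0$. Feeding $d\omega=0$ into Proposition \ref{diferenciaconexiones} annihilates the two $d\omega$-terms and leaves only $g(\widetilde{D}(U,V),W)=-\tfrac{1}{2}\,\omega(W)\,(L_{\xi}\widetilde{g})(U,V)$. Substituting this, the identity becomes
\[
\widetilde{g}(\widetilde{\nabla}_{U}V,W)=g(\nabla_{U}V,W)+\omega(W)\Big(\tfrac{1}{2}(L_{\xi}\widetilde{g})(U,V)+\omega(\widetilde{\nabla}_{U}V)\Big).
\]

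The final and decisive step is to show that the bracket equals $U(\omega(V))$. Here I would use that $\xi$ is $\widetilde{g}$-dual to $\omega$, so $\omega(\cdot)=\widetilde{g}(\xi,\cdot)$ on $TL$; in particular $\omega(\widetilde{\nabla}_{U}V)=\widetilde{g}(\xi,\widetilde{\nabla}_{U}V)$. Closedness of $\omega$ also makes $\xi$ $\widetilde{g}$-irrotational, that is $\widetilde{g}(\widetilde{\nabla}_{U}\xi,V)=\widetilde{g}(\widetilde{\nabla}_{V}\xi,U)$, so the standard identity $(L_{\xi}\widetilde{g})(U,V)=\widetilde{g}(\widetilde{\nabla}_{U}\xi,V)+\widetilde{g}(\widetilde{\nabla}_{V}\xi,U)$ collapses to $\tfrac{1}{2}(L_{\xi}\widetilde{g})(U,V)=\widetilde{g}(\widetilde{\nabla}_{U}\xi,V)$. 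The bracket is then $\widetilde{g}(\widetilde{\nabla}_{U}\xi,V)+\widetilde{g}(\xi,\widetilde{\nabla}_{U}V)$, which by metric compatibility of $\widetilde{\nabla}$ with $\widetilde{g}$ is exactly $U(\widetilde{g}(\xi,V))=U(\omega(V))$, yielding the claim. The main obstacle is precisely this last identification: everything hinges on recognizing that closedness of $\zeta$ supplies simultaneously $d\omega=0$ \emph{and} the self-adjointness of $S(U)=\widetilde{\nabla}_{U}\xi$, which is what converts the raw Lie-derivative term into the clean derivative $U(\omega(V))$.
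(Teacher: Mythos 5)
Your proof is correct, and it takes a genuinely different route from the paper's. The paper decomposes $V=\omega(V)\xi+Y$ with $Y\in SL(\zeta)$ and assembles the formula from the component identities $\widetilde{g}(\widetilde{\nabla}_{X}Y,Z)=g(\nabla_{X}Y,Z)$ (Proposition \ref{derivadadeLie}) and $\widetilde{\nabla}_{U}\xi=-A^{\ast}(U)$ (Proposition \ref{segundaformafund}), extending them from screen arguments to arbitrary ones; the extra term $U(\omega(V))\omega(W)$ then appears simply from differentiating the coefficient $\omega(V)$. You instead work directly from the difference tensor formula of Proposition \ref{diferenciaconexiones}: closedness kills the $d\omega$-terms, the pointwise identity $\widetilde{g}=g+\omega\otimes\omega$ on $TL$ converts the inner products, and the residual bracket $\tfrac{1}{2}(L_{\xi}\widetilde{g})(U,V)+\omega(\widetilde{\nabla}_{U}V)$ collapses to $U(\omega(V))$ via the self-adjointness of $\widetilde{\nabla}\xi$ (from $d\omega=0$) together with $\widetilde{\nabla}$-metric compatibility. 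Your argument buys self-containedness: it bypasses Propositions \ref{derivadadeLie} and \ref{segundaformafund} entirely, needs no screen decomposition, and in particular avoids the paper's unproved ``using these equations it can be checked'' extension step, since your computation is valid for arbitrary $U,V,W\in\mathfrak{X}(L)$ from the outset. The paper's proof, in exchange, is shorter given the machinery already in place and makes the geometric mechanism transparent — the screen components of the two connections agree, and the discrepancy lives entirely in the $\xi$-direction.
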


\begin{proof}
Given $X,Y,Z\in\mathcal{S}^{\zeta}$, from Proposition \ref{derivadadeLie} and
\ref{segundaformafund}, $\widetilde{g}(\widetilde{\nabla}_{X}Y,Z)=g(\nabla
_{X}Y,Z)$ and $\widetilde{g}(\widetilde{\nabla}_{X}\xi,Y)=g(\nabla_{X}\xi,Y)$
respectively. Using these equations it can be checked that $\widetilde
{g}(\widetilde{\nabla}_{U}Y,W)=g(\nabla_{U}Y,W)$ and $\widetilde{g}%
(\widetilde{\nabla}_{U}\xi,W)=g(\nabla_{U}\xi,W)$ for all $U,W\in
\mathfrak{X}(L)$ and $Y\in\mathcal{S}^{\zeta}$. Now, if we take $V=\omega
(V)\xi+Y$, then
\begin{align*}
\widetilde{g}(\widetilde{\nabla}_{U} V,W)  &  =U(\omega(V))\omega(W)
+\omega(V)\widetilde{g}(\widetilde{\nabla}_{U}\xi,W)+\widetilde{g}%
(\widetilde{\nabla}_{U}Y,W)\\
&  =U(\omega(V))\omega(W)+g(\nabla_{U} V,W).
\end{align*}

\end{proof}

\section{Curvature relations}

\label{seccioncurvatura} In this section we relate the curvature tensor
$R^{L}$ derived from the linear connection $\nabla^{L}$ and the curvature
tensor $\widetilde{R}$ of $(L,\widetilde{g})$ as a Riemannian manifold itself.
Using Gauss-Codazzi equations, we can also relate the curvature of $(M,g)$ and
$(L,\widetilde{g})$. We use the following well-known general result.

\begin{lemma}
\label{diferenciacurvaturas} Let $R^{L}$, $\widetilde{R}$ be the curvature
tensors associated to arbitrary symmetric connections $\nabla^{L}$,
$\widetilde{\nabla}$ on a manifold $L$. Given $U,V,W\in\mathfrak{X}(L)$ it
holds
\begin{align*}
R_{UV}^{L}W  &  =\widetilde{R}_{UV}W+(\widetilde{\nabla}_{U}D^{L}%
)(V,W)-(\widetilde{\nabla}_{V}D^{L})(U,W)\\
&  +D^{L}(U,D^{L}(V,W))-D^{L}(V,D^{L}(U,W)),
\end{align*}
where $D^{L}=\nabla^{L}-\widetilde{\nabla}$.
\end{lemma}

First, we relate the sectional curvatures of $\widetilde{g}$-orthogonal planes
to $\xi$. In this section we need the adjoint of $\widetilde{\nabla}_{U}\xi$
as an endomorphism, so we will use the notation $S(U)=\widetilde{\nabla}%
_{U}\xi$ for simplicity.

\begin{theorem}
\label{diferenciacurvatura3} Let $M$ be a Lorentzian manifold, $L$ a null
hypersurface and $\zeta$ a rigging for it. If $\Pi=span(X,Y)$, being
$X,Y\in\mathcal{S}^{\zeta}$ unitary and orthogonal vectors, then
\begin{align*}
K(\Pi)-\widetilde{K}(\Pi)  &  =-C(Y,Y)B(X,X)-C(X,X)B(Y,Y)\\
&  +\left(  C(X,Y)+C(Y,X)\right)  B(X,Y)\\
&  +B(X,X)B(Y,Y)-B(X,Y)^{2}+\frac{3}{4}d\omega(X,Y)^{2}.
\end{align*}

\end{theorem}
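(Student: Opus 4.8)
The plan is to connect the two sectional curvatures through the induced connection $\nabla^{L}$, using the Gauss--Codazzi relations to compare $R$ with $R^{L}$ and the comparison Lemma \ref{diferenciacurvaturas} to compare $R^{L}$ with $\widetilde{R}$. I would first record the reduction that makes both members honest sectional curvatures: since $\omega$ vanishes on $SL(\zeta)$, the metrics $g$ and $\widetilde{g}$ agree on the screen, so ``unitary and orthogonal'' is unambiguous, $K(\Pi)=g(R_{XY}Y,X)$, and, because $\omega(X)=0$, $\widetilde{K}(\Pi)=\widetilde{g}(\widetilde{R}_{XY}Y,X)=g(\widetilde{R}_{XY}Y,X)$. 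Thus the whole argument can be carried out with $g$ and the single contraction against $X$, which is crucial because every vector proportional to $\xi$ is then $g$-orthogonal to $X$ and drops out.

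The two bridges are then immediate. Feeding $U=X$, $V=W=Y$ into the Gauss--Codazzi equation (\ref{eqcurv2}) and using $g(A(U),X)=C(U,X)$ gives $K(\Pi)=g(R_{XY}^{L}Y,X)+B(X,Y)C(Y,X)-B(Y,Y)C(X,X)$. Contracting Lemma \ref{diferenciacurvaturas} against $X$ gives $g(R_{XY}^{L}Y,X)=\widetilde{K}(\Pi)+T$, where $T$ gathers the four difference-tensor terms $g((\widetilde{\nabla}_{X}D)(Y,Y),X)-g((\widetilde{\nabla}_{Y}D)(X,Y),X)+g(D(X,D(Y,Y)),X)-g(D(Y,D(X,Y)),X)$. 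Subtracting, the theorem reduces to the identity $T=B(X,X)B(Y,Y)-B(X,Y)^{2}-C(Y,Y)B(X,X)+C(X,Y)B(X,Y)+\tfrac{3}{4}d\omega(X,Y)^{2}$, after which the two Codazzi terms supply the remaining $-C(X,X)B(Y,Y)$ and $+C(Y,X)B(X,Y)$.

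Everything now rests on evaluating $T$ with the explicit shape of $D$. The inputs I would assemble are: $D(X,Y)=\big(C(X,Y)+\widetilde{g}(S(X),Y)\big)\xi$ on $SL(\zeta)$ from (\ref{remarkD}); the orthogonality $g(\xi,X)=0$; the two evaluations $g(D(\xi,Y),X)=\tfrac{1}{2}d\omega(X,Y)$ and $g(D(X,\xi),X)=0$ coming from (\ref{tensordiferencia}) and point 1 of Corollary \ref{corformula}; and the key decomposition $\widetilde{g}(S(X),Y)=-B(X,Y)+\tfrac{1}{2}d\omega(X,Y)$, whose symmetric part is Proposition \ref{derivadadeLie}(2) and whose antisymmetric part is the formula $d\omega(X,Y)=(\widetilde{\nabla}_{X}\omega)(Y)-(\widetilde{\nabla}_{Y}\omega)(X)$. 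When expanding $(\widetilde{\nabla}_{U}D)(V,W)$ by the product rule I would use $\widetilde{\nabla}_{X}Y=\nabla_{X}^{\ast}Y-\widetilde{g}(S(X),Y)\xi$ from Proposition \ref{derivadadeLie}(1): the screen part $\nabla_{X}^{\ast}Y$ keeps $D$ proportional to $\xi$ and dies against $X$, while the $\xi$-component feeds the mixed evaluation $D(\xi,\cdot)$. The main obstacle is precisely this bookkeeping: tracking which of the many terms survive the contraction against $X$, and checking that the antisymmetric $d\omega$ pieces combine into the single coefficient $\tfrac{3}{4}$ rather than cancelling. No genuine conceptual difficulty remains once the decomposition of $\widetilde{g}(S(X),Y)$ and the evaluation $g(D(\xi,Y),X)=\tfrac{1}{2}d\omega(X,Y)$ are in hand; it is a careful but finite computation.
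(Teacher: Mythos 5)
Your proposal is correct and follows essentially the same route as the paper: the paper likewise combines Lemma \ref{diferenciacurvaturas} (contracted against $X$) with the Gauss--Codazzi equation (\ref{eqcurv2}), computes the same four difference-tensor terms using (\ref{tensordiferencia}), (\ref{remarkD}) and Corollary \ref{corformula}, and then converts $\widetilde{g}(S(X),Y)$ into $B$ and $d\omega$ via Proposition \ref{derivadadeLie}(2) and the identity $d\omega(X,Y)=\widetilde{g}(S(X),Y)-\widetilde{g}(S(Y),X)$, exactly the decomposition you propose. Your reduction target for $T$ agrees term by term with what the paper's computation yields before the final Gauss--Codazzi substitution, so the remaining work you defer is precisely the bookkeeping the paper carries out.
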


\begin{proof}
From Lemma \ref{diferenciacurvaturas},
\begin{align*}
\widetilde{g}(R_{XY}^{L}Y-\widetilde{R}_{XY}Y,X)  &  =\widetilde
{g}((\widetilde{\nabla}_{X}D^{L})(Y,Y),X)-\widetilde{g}((\widetilde{\nabla
}_{Y}D^{L})(X,Y),X)\\
&  +\widetilde{g}(D^{L}(X,D^{L}(Y,Y)),X)-\widetilde{g}(D^{L}(Y,D^{L}(X,Y)),X).
\end{align*}

We compute each term. Using Formulas (\ref{tensordiferencia}) and
(\ref{remarkD}), the first one is
\[
\widetilde{g}((\widetilde{\nabla}_{X}D^{L})(Y,Y),X)=\left(  C(Y,Y)+\widetilde
{g}(S(Y),Y)\right)  \widetilde{g}(S(X),X)+g(S(X),Y)d\omega(X,Y).
\]

The second term is computed in a similar way.
\[
\widetilde{g}((\widetilde{\nabla}_{Y}D^{L})(X,Y),X)=\left(  C(X,Y)+\widetilde
{g}(S(X),Y)\right)  \widetilde{g}(S(Y),X)+\frac{1}{2}\widetilde{g}%
(S(Y),X)d\omega(X,Y).
\]

The third one vanishes by Corollary \ref{corformula}. We compute the last
one.
\begin{align*}
\widetilde{g}(D^{L}(Y,D^{L}(X,Y)),X)  &  =-\frac{1}{2}\widetilde{g}%
(D^{L}(X,Y),\xi)d\omega(Y,X)\\
&  =\frac{1}{2}\left(  C(X,Y)+\widetilde{g}(S(X),Y)\right)  d\omega(X,Y).
\end{align*}

Using the identities $d\omega(X,Y)=\widetilde{g}(S(X),Y)-\widetilde
{g}(X,S(Y))$ and $\left(  L_{\xi}\widetilde{g}\right)  (X,Y)=\widetilde
{g}(S(X),Y)+\widetilde{g}(X,S(Y))$ we have
\begin{align*}
\widetilde{g}(R_{XY}^{L}Y-\widetilde{R}_{XY}Y,X)  &  =\frac{1}{2}C(Y,Y)\left(
L_{\xi}\widetilde{g}\right)  (X,X)+\frac{1}{4}\left(  L_{\xi}\widetilde
{g}\right)  (Y,Y)\left(  L_{\xi}\widetilde{g}\right)  (X,X)\\
&  -\frac{1}{2}C(X,Y)\left(  L_{\xi}\widetilde{g}\right)  (X,Y)-\widetilde
{g}(S(X),Y)\widetilde{g}(S(Y),X)+\frac{1}{2}d\omega(X,Y)^{2}.
\end{align*}

We can express
\begin{align*}
\widetilde{g}(S(X),Y)\widetilde{g}(S(Y),X)  &  =\frac{1}{4}\left(  \left(
L_{\xi}\widetilde{g}\right)  (X,Y)+d\omega(X,Y)\right)  \left(  \left(
L_{\xi}\widetilde{g}\right)  (X,Y)-d\omega(X,Y)\right) \\
&  =\frac{1}{4}\left(  L_{\xi}\widetilde{g}\right)  (X,Y)^{2}-\frac{1}%
{4}d\omega(X,Y)^{2},
\end{align*}
thus
\begin{align*}
\widetilde{g}(R_{XY}^{L}Y-\widetilde{R}_{XY}Y,X)  &  =\frac{1}{2}C(Y,Y)\left(
L_{\xi}\widetilde{g}\right)  (X,X)+\frac{1}{4}\left(  L_{\xi}\widetilde
{g}\right)  (X,X)\left(  L_{\xi}\widetilde{g}\right)  (Y,Y)\\
&  -\frac{1}{2}C(X,Y)\left(  L_{\xi}\widetilde{g}\right)  (X,Y)-\frac{1}%
{4}\left(  L_{\xi}\widetilde{g}\right)  (X,Y)^{2}+\frac{3}{4}d\omega(X,Y)^{2}.
\end{align*}

Finally, using Proposition \ref{derivadadeLie} and the Gauss-Codazzi equation
(\ref{eqcurv2}), we get the result.
\end{proof}

Observe that if $L$ is totally geodesic, then $K(\Pi)\geq\widetilde{K}(\Pi)$
for any tangent plane contained in $\mathcal{S}^{\zeta}$.

Now, consider $S^{\ast}:\mathfrak{X}(L)\rightarrow\mathfrak{X}(L)$ the adjoint
endomorphism of $S$. We can decompose $S^{\ast}(U)$ as
\begin{equation}
S^{\ast}(U)=S^{\ast\perp}(U)+\widetilde{g}(\widetilde{\nabla}_{\xi}\xi,U)\xi,
\label{operadorT}%
\end{equation}
where $S^{\ast\perp}(U)$ is $\widetilde{g}$-orthogonal to $\xi$. Observe that
$S^{\ast}(\xi)=0$.

\begin{definition}
We say that the rigged vector field $\xi$ is orthogonally normal if
\begin{equation}
\label{eqnormal}\widetilde{g}(S(X),S(X))=\widetilde{g}(S^{*\perp}%
(X),S^{*\perp}(X))
\end{equation}
for all $X\in\mathcal{S}^{\zeta}$.
\end{definition}

There are two important cases where the rigged vector field is orthogonally
normal: if $\mathcal{S}^{\zeta}$ is integrable and if $L$ is totally umbilic.
Indeed, if $\mathcal{S}^{\zeta}$ is integrable, then $\xi$ is $\widetilde{g}%
$-irrotational. Therefore $S^{\ast\perp}(X)=S(X)$ for all $X\in\mathcal{S}%
^{\zeta}$ and obviously Equation (\ref{eqnormal}) is satisfied. On the other
hand, if $L$ is totally umbilic, from Remark \ref{remark2}, $S^{\ast\perp
}(X)=2\rho X-S(X)$ for certain $\rho\in C^{\infty}(L)$ and all $X\in
\mathcal{S}^{\zeta}$ and Equation (\ref{eqnormal}) can be easily checked.

Now, we state a formula relating the null sectional curvature and the
$\widetilde{g}$-sectional curvature of planes containing $\xi$ in the case of
being orthogonally normal.

\begin{theorem}
\label{diferenciacurvaturas2} Let $M$ be a Lorentzian manifold, $L$ a null
hypersurface and $\zeta$ a rigging for $L$. Suppose that its rigged vector
field $\xi$ is orthogonally normal. If $\Pi=span(X,\xi)$, where $X\in
\mathcal{S}^{\zeta}$ is a unitary vector, then
\begin{align*}
\mathcal{K}_{\xi}(\Pi)-\widetilde{K}(\Pi)  &  =\tau(\xi)B(X,X)-\widetilde
{g}(\widetilde{\nabla}_{X}\widetilde{\nabla}_{\xi}\xi,X)+\widetilde
{g}(X,\widetilde{\nabla}_{\xi}\xi)^{2}\\
&  +\frac{1}{2}\left(  \widetilde{g}(S^{2}(X),X)-\widetilde{g}%
(S(X),S(X)\right)  .
\end{align*}

\end{theorem}

\begin{proof}
Applying Lemma \ref{diferenciacurvaturas}, we have
\begin{align*}
\widetilde{g}\left(  R_{X\xi}^{L}\xi-\widetilde{R}_{X\xi}\xi,X\right)   &
=\widetilde{g}\left(  (\widetilde{\nabla}_{X}D^{L})(\xi,\xi),X\right)
-\widetilde{g}\left(  (\widetilde{\nabla}_{\xi}D^{L})(X,\xi),X\right) \\
&  +\widetilde{g}\left(  D^{L}(X,D^{L}(\xi,\xi)),X\right)  -\widetilde
{g}\left(  D^{L}(\xi,D^{L}(X,\xi)),X\right)  .
\end{align*}

We compute each term. For the first one,
\begin{align*}
\widetilde{g}\left(  (\widetilde{\nabla}_{X}D^{L})(\xi,\xi),X\right)   &
=\widetilde{g}(\widetilde{\nabla}_{X}D^{L}(\xi,\xi),X)+d\omega(\widetilde
{\nabla}_{X}\xi,X)\\
&  =-\tau(\xi)\widetilde{g}(S(X),X)-\widetilde{g}(\widetilde{\nabla}%
_{X}\widetilde{\nabla}_{\xi}\xi,X)+d\omega(\widetilde{\nabla}_{X}\xi,X).
\end{align*}

The second one is
\[
\widetilde{g}\left(  (\widetilde{\nabla}_{\xi}D^{L})(X,\xi),X\right)
=-\widetilde{g}(D^{L}(X,\xi),\widetilde{\nabla}_{\xi}X)-\widetilde{g}%
(D^{L}(\widetilde{\nabla}_{\xi}X,\xi),X),
\]
but
\begin{align*}
\widetilde{g}(D^{L}(X,\xi),\widetilde{\nabla}_{\xi}X)  &  =g(D^{L}%
(X,\xi),\widetilde{\nabla}_{\xi}X)+\widetilde{g}(D^{L}(X,\xi),\xi
)\widetilde{g}(\widetilde{\nabla}_{\xi}X,\xi)\\
&  =-\frac{1}{2}\left(  \widetilde{g}(\xi,\widetilde{\nabla}_{\xi}X)(L_{\xi
}\widetilde{g})(X,\xi)+d\omega(X,\widetilde{\nabla}_{\xi}X)\right)
+\tau(X)\widetilde{g}(S(\xi),X)\\
&  =\frac{1}{2}\widetilde{g}(S(\xi),X)^{2}-\frac{1}{2}d\omega(X,\widetilde
{\nabla}_{\xi}X)+\tau(X)\widetilde{g}(S(\xi),X).
\end{align*}

Therefore,
\begin{align*}
\widetilde{g}\left(  (\widetilde{\nabla}_{\xi}D^{L})(X,\xi),X\right)   &
=-\frac{1}{2}\widetilde{g}(S(\xi),X)^{2}+\frac{1}{2}d\omega(X,\widetilde
{\nabla}_{\xi}X)-\tau(X)\widetilde{g}(S(\xi),X)\\
&  +\frac{1}{2}\left(  d\omega(\widetilde{\nabla}_{\xi}X,X)+\widetilde
{g}(\widetilde{\nabla}_{\xi}X,\xi)d\omega(\xi,X)\right) \\
&  =-\widetilde{g}(X,S(\xi))^{2}-\tau(X)\widetilde{g}(S(\xi),X).
\end{align*}

The third one is zero due to Corollary \ref{corformula}. The last one is
\begin{align*}
\widetilde{g}\left(  D^{L}(\xi,D^{L}(X,\xi)),X\right)   &  =-\frac{1}%
{2}\left(  d\omega(D^{L}(X,\xi),X)+\widetilde{g}(D^{L}(X,\xi),\xi)d\omega
(\xi,X)\right) \\
&  =-\frac{1}{2}\left(  \widetilde{g}(\widetilde{\nabla}_{D^{L}(X,\xi)}%
\xi,X)-\widetilde{g}(D^{L}(X,\xi),\widetilde{\nabla}_{X}\xi)-\tau
(X)\widetilde{g}(S(\xi),X)\right) \\
&  =-\frac{1}{2}\widetilde{g}(\widetilde{\nabla}_{D^{L}(X,\xi)}\xi,X)-\frac
{1}{4}d\omega(X,\widetilde{\nabla}_{X}\xi)+\frac{1}{2}\tau(X)\widetilde
{g}(S(\xi),X).
\end{align*}

Now, using Formula (\ref{eqcurv}),
\begin{align*}
\widetilde{g}\left(  R_{X\xi}\xi-\widetilde{R}_{X\xi}\xi,X\right)   &
=-\tau(\xi)\widetilde{g}(S(X),X)-\widetilde{g}(\widetilde{\nabla}%
_{X}\widetilde{\nabla}_{\xi}\xi,X)+\widetilde{g}(X,S(\xi))^{2}\\
&  +\frac{1}{2}\tau(X)\widetilde{g}(S(\xi),X)+\frac{1}{2}\widetilde
{g}(\widetilde{\nabla}_{D^{L}(X,\xi)}\xi,X)+\frac{3}{4}d\omega(\widetilde
{\nabla}_{X}\xi,X).
\end{align*}

Finally, we use that $\xi$ is orthogonally normal to compute the last part of
the above formula. Taking into account Formulas (\ref{tensordiferencia}),
(\ref{operadorT}) and Corollary \ref{corformula}
\begin{align*}
&  \frac{1}{2}\widetilde{g}(\widetilde{\nabla}_{D^{L}(X,\xi)}\xi,X)+\frac
{3}{4}d\omega(\widetilde{\nabla}_{X}\xi,X)=\frac{1}{2}\widetilde{g}%
(D^{L}(X,\xi),S^{\ast}(X))+\frac{3}{4}d\omega(S(X),X)\\
&  =-\frac{1}{4}d\omega(X,S^{\ast\perp}(X))+\frac{3}{4}d\omega(S(X),X)-\frac
{1}{2}\tau(X)\widetilde{g}(S(\xi),X)\\
&  =\frac{1}{2}\left(  \widetilde{g}(S^{2}(X),X)-\widetilde{g}%
(S(X),S(X)\right)  -\frac{1}{2}\tau(X)\widetilde{g}(S(\xi),X),
\end{align*}
and we obtain the desired result.
\end{proof}

\begin{corollary}
\label{Riccixi}Let $L$ be a null hypersurface and $\zeta$ a rigging for it.
Suppose that its rigged vector field $\xi$ is orthogonally normal. Then
\[
Ric(\xi)=\widetilde{Ric}(\xi)+\tau(\xi)H-\widetilde{\operatorname{div}%
}\widetilde{\nabla}_{\xi}\xi+\frac{1}{2}\left(  tr(S^{2})-|S^{\perp}%
|^{2}\right)  ,
\]
where $tr$ denotes the trace and $|S^{\perp}|^{2}=\sum_{i=3}^{n}\widetilde
{g}(S(e_{i}),S(e_{i}))$, being $\{e_{3},\ldots,e_{n}\}$ an orthonormal basis
of $\mathcal{S}^{\zeta}$.
\end{corollary}

Observe that the last part of the formula in Theorem
\ref{diferenciacurvaturas2} and Corollary \ref{Riccixi} has sign. Indeed,
using the Cauchy-Schwarz inequality, $\widetilde{g}(S^{2}(X),X)\leq
\widetilde{g}(S(X),S(X)).$

Suppose now that the screen distribution $\mathcal{S}^{\zeta}$ is integrable.
We can consider a leaf of $\mathcal{S}^{\zeta}$ as a submanifold of $(M,g)$ or
$(L,\widetilde{g})$. In the first case, we know that the induced Levi-Civita
connection is $\nabla^{\ast}$ and its second fundamental form is
$\mathbb{I}^{\mathcal{S}^{\zeta}}(X,Y)=C(X,Y)\xi+B(X,Y)N$. In the second case,
the induced connection from $(L,\widetilde{g})$ is also $\nabla^{\ast}$ but
its second fundamental form is $\mathbb{I}(X,Y)=B(X,Y)\xi$. Therefore, if we
call $K^{S}$ and $\widetilde{K}^{S}$ the induced sectional curvatures on a
leaf $S$ of $\mathcal{S}^{\zeta}$ from $(M,g)$ and $(L,\widetilde{g})$
respectively, then
\begin{align*}
K^{S}(\Pi)  &  =\widetilde{K}^{S}(\Pi),\\
K(\Pi)  &  =K^{S}(\Pi)-C(X,X)B(Y,Y)-B(X,X)C(Y,Y)\\
&  +2C(X,Y)B(X,Y),\\
\widetilde{K}(\Pi)  &  =\widetilde{K}^{S}(\Pi)-B(X,X)B(Y,Y)+B(X,Y)^{2},
\end{align*}
for any tangent plane $\Pi=span(X,Y)$ to $\mathcal{S}^{\zeta}$. Moreover, from
Proposition \ref{segundaformafund}, Theorem \ref{diferenciacurvaturas2} and
Corollary \ref{Riccixi} we have the following.

\begin{corollary}
\label{Cor2} Let $L$ be a null hypersurface and $\zeta$ a closed rigging for
it. Then

\begin{enumerate}
\item $\mathcal{K}_{\xi}(\Pi)=\widetilde{K}(\Pi)+\tau(\xi)\frac{B(X,X)}%
{g(X,X)}$, where $\Pi=span(\xi,X)$ and $X\in\mathcal{S}^{\zeta}$.

\item $Ric(\xi)=\widetilde{Ric}(\xi)+\tau(\xi)H$.
\end{enumerate}
\end{corollary}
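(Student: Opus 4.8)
The plan is to specialize the two general curvature formulas already available, namely Theorem \ref{diferenciacurvaturas2} and Corollary \ref{Riccixi}, to the closed rigging situation, where the additional structure forces almost every correction term to collapse. First I would record the facts that make these formulas both applicable and simple. Since $\zeta$ is closed so is $\omega$, hence $SL(\zeta)$ is integrable; as observed just before Theorem \ref{diferenciacurvaturas2}, integrability of the screen already guarantees that the rigged vector field $\xi$ is orthogonally normal, so both results may be invoked. Moreover, Proposition \ref{segundaformafund} gives $\widetilde{\nabla}_{\xi}\xi=0$ and $S(U)=\widetilde{\nabla}_{U}\xi=-A^{\ast}(U)$ for all $U\in\mathfrak{X}(L)$.

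The crucial observation is that $d\omega=0$, combined with the identity $d\omega(X,Y)=\widetilde{g}(S(X),Y)-\widetilde{g}(X,S(Y))$ used in the proof of Theorem \ref{diferenciacurvatura3}, makes $S$ self-adjoint on $SL(\zeta)$; together with $S(\xi)=0$ (whence also $S^{\ast}(\xi)=0$) this shows $S$ is $\widetilde{g}$-self-adjoint on all of $TL$. Consequently $\widetilde{g}(S^{2}(X),X)=\widetilde{g}(S(X),S(X))$ for every $X$, which is exactly the quantity with a sign that was flagged after Corollary \ref{Riccixi}.

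For point 1, I would take $X\in SL(\zeta)$ unitary (recalling that $g$ and $\widetilde{g}$ agree on $SL(\zeta)$, since $\alpha$ vanishes there, so ``unitary'' is unambiguous) and substitute into Theorem \ref{diferenciacurvaturas2}. The terms $-\widetilde{g}(\widetilde{\nabla}_{X}\widetilde{\nabla}_{\xi}\xi,X)$ and $\widetilde{g}(X,\widetilde{\nabla}_{\xi}\xi)^{2}$ vanish because $\widetilde{\nabla}_{\xi}\xi=0$, and the bracket $\frac{1}{2}(\widetilde{g}(S^{2}(X),X)-\widetilde{g}(S(X),S(X)))$ vanishes by self-adjointness, leaving $\mathcal{K}_{\xi}(\Pi)-\widetilde{K}(\Pi)=\tau(\xi)B(X,X)$. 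Since $\mathcal{K}_{\xi}$ and $\widetilde{K}$ depend only on the plane $\Pi$ while $B(X,X)/g(X,X)$ is scale invariant, replacing $X$ by $X/\vert X\vert$ yields the stated formula for arbitrary $X\in SL(\zeta)$.

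For point 2, I would feed the same two vanishing facts into Corollary \ref{Riccixi}. The divergence term $\widetilde{\operatorname{div}}\widetilde{\nabla}_{\xi}\xi$ disappears because $\widetilde{\nabla}_{\xi}\xi=0$, and computing the trace in the $\widetilde{g}$-orthonormal frame $\{\xi,e_{3},\ldots,e_{n}\}$, where $\xi$ contributes nothing since $S(\xi)=0$, gives $tr(S^{2})=\sum_{i}\widetilde{g}(S^{2}(e_{i}),e_{i})=\sum_{i}\widetilde{g}(S(e_{i}),S(e_{i}))=\vert S^{\perp}\vert^{2}$ by self-adjointness. Hence $tr(S^{2})-\vert S^{\perp}\vert^{2}=0$ and $Ric(\xi)=\widetilde{Ric}(\xi)+\tau(\xi)H$. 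I do not expect a genuine obstacle here: the whole argument is the systematic cancellation of correction terms, and the only point requiring care is the deduction of the self-adjointness of $S$ from $d\omega=0$, which is precisely what annihilates both the sectional and the Ricci corrections.
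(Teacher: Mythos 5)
Your proof is correct and follows exactly the route the paper intends: the corollary is stated there as an immediate specialization of Theorem \ref{diferenciacurvaturas2} and Corollary \ref{Riccixi}, with orthogonal normality coming from integrability of $SL(\zeta)$ and the correction terms killed by $\widetilde{\nabla}_{\xi}\xi=0$ and the self-adjointness of $S=-A^{\ast}$ (Proposition \ref{segundaformafund}). You have merely written out the cancellations that the paper leaves implicit, and all of them check out.
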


Moreover, an explicit relation between $\widetilde{R}$ and $R^{L}$ can be
given. For this, recall that if we consider a closed rigging, then $C$ is a
symmetric tensor and using point 3 and 4 of Corollary \ref{corformula},
$C(\xi,X)=-\tau(X)$ for all $X\in\mathcal{S}^{\zeta}$. We need a previous lemma.

\begin{lemma}
\label{lemaD} Let $L$ be a null hypersurface and $\zeta$ a closed rigging for
it. Take $U,V\in\mathfrak{X}(L)$ and $X\in\mathcal{S}^{\zeta}$, then

\begin{enumerate}
\item $\widetilde{\nabla}_{U}B=\nabla^{L}_{U}B$.

\item The tensor $D^{L}=\nabla^{L}-\widetilde{\nabla}$ is given by
\begin{align*}
D^{L}(U,X)  &  =\left(  C(U,X)-B(U,X)\right)  \xi,\\
D^{L}(U,\xi)  &  =-\tau(U)\xi.
\end{align*}

\item The derivative of $D^{L}$ with respect to $\widetilde{\nabla}$ is given
by%
\begin{align*}
\left(  \widetilde{\nabla}_{U}D^{L}\right)  (V,X)  &  =\left(  \left(
\nabla_{U}^{\ast L}C\right)  (V,X)-\left(  \nabla_{U}^{L}B\right)
(V,X)+\tau(V)B(U,X)\right)  \xi\\
&  -A^{\ast}(U)C(V,X)+A^{\ast}(U)B(V,X)+D^{L}(D^{L}(U,V),X),
\end{align*}%
\begin{align*}
\left(  \widetilde{\nabla}_{U}D^{L}\right)  (V,\xi)  &  =\left(
-U(\tau(V))+\tau(\widetilde{\nabla}_{V}U)+C(V,A^{\ast}(U))-B(V,A^{\ast
}(U))\right)  \xi\\
&  +\tau(V)A^{\ast}(U).
\end{align*}

\end{enumerate}
\end{lemma}

\begin{proof}
To prove the first point just take into account that $B(X,\xi)=0$. For the
second point apply Formula (\ref{remarkD}) and Proposition \ref{derivadadeLie}
and \ref{segundaformafund}. Third point is a straightforward computation.
\end{proof}

\begin{theorem}
\label{Teor3}Let $M$ be a Lorentzian manifold, $L$ a null hypersurface and
$\zeta$ a closed rigging for it. Take $U,V\in\mathfrak{X}(L)$ and
$X\in\mathcal{S}^{\zeta}$. Then
\begin{align*}
R_{UV}^{L}X-\widetilde{R}_{UV}X  &  =\left(  g(R_{UV}X,N)-g(R_{UV}%
X,\xi)\right)  \xi\\
&  +C(U,X)A^{\ast}(V)-C(V,X)A^{\ast}(U)\\
&  +B(U,X)\nabla_{V}\xi-B(V,X)\nabla_{U}\xi,\\
R_{UV}^{L}\xi-\widetilde{R}_{UV}\xi &  =g(R_{UV}\xi,N)\xi-\tau(U)A^{\ast
}(V)+\tau(V)A^{\ast}(U).
\end{align*}

\end{theorem}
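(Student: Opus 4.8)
The plan is to apply the general difference-of-curvatures identity of Lemma \ref{diferenciacurvaturas} twice, once with $W=X\in SL(\zeta)$ and once with $W=\xi$, substitute the explicit expressions for $D$ and for $\widetilde{\nabla}D$ provided by Lemma \ref{lemaD}, and then recognize the surviving first-order expressions as ambient curvature components by means of the Gauss-Codazzi equations. Throughout, the quadratic terms $D(U,D(V,\cdot))-D(V,D(U,\cdot))$ are evaluated directly from point 1 of Lemma \ref{lemaD}, using $D(U,X)=(C(U,X)-B(U,X))\xi$ and $D(U,\xi)=-\tau(U)\xi$.

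For $W=X$, Lemma \ref{diferenciacurvaturas} expresses $R_{UV}^{L}X-\widetilde{R}_{UV}X$ as the antisymmetrization of $(\widetilde{\nabla}_U D)(V,X)$ plus the quadratic terms. First I note that the $D(D(U,V),X)$ contribution appearing in point 3 of Lemma \ref{lemaD} cancels under antisymmetrization, since $D$ is symmetric. Collecting the coefficient of $\xi$, the combinations $(\nabla_{U}^{\ast L}C)(V,X)-(\nabla_{V}^{\ast L}C)(U,X)$ and $-(\nabla_U^L B)(V,X)+(\nabla_V^L B)(U,X)$ are rewritten through \eqref{GC2} and \eqref{eqcurvatura1}, which introduces $g(R_{UV}X,N)-g(R_{UV}X,\xi)$ together with explicit $\tau C$ and $\tau B$ terms; the $\tau C$ terms cancel against those produced by the quadratic part, leaving exactly $\tau(U)B(V,X)-\tau(V)B(U,X)$ in the $\xi$-coefficient. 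The tangential remainder is $C(U,X)A^{\ast}(V)-C(V,X)A^{\ast}(U)+B(V,X)A^{\ast}(U)-B(U,X)A^{\ast}(V)$. The final repackaging uses $\nabla_W\xi=-\tau(W)\xi-A^{\ast}(W)$: the identity $B(U,X)\nabla_V\xi-B(V,X)\nabla_U\xi=(\tau(U)B(V,X)-\tau(V)B(U,X))\xi-B(U,X)A^{\ast}(V)+B(V,X)A^{\ast}(U)$ relocates the leftover $\tau B$ terms from the $\xi$-coefficient into the tangential part, producing precisely the claimed expression.

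For $W=\xi$ I proceed the same way, using the second formula of point 3 of Lemma \ref{lemaD}. Here the quadratic terms vanish, since $D(V,\xi)=-\tau(V)\xi$ forces $D(U,D(V,\xi))-D(V,D(U,\xi))$ to reduce to the symmetric $\tau(U)\tau(V)\xi$. For the $\xi$-coefficient, torsion-freeness of $\widetilde{\nabla}$ turns $-U(\tau(V))+V(\tau(U))+\tau([U,V])$ into $-d\tau(U,V)$, and the combination $B(U,A^{\ast}(V))-B(V,A^{\ast}(U))$ vanishes by the self-adjointness relation \eqref{ABadjunto} together with the symmetry of $B$; comparing the survivor $-d\tau(U,V)+C(V,A^{\ast}(U))-C(U,A^{\ast}(V))$ with \eqref{GC3} identifies it with $g(R_{UV}\xi,N)$. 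The tangential part is $\tau(V)A^{\ast}(U)-\tau(U)A^{\ast}(V)$, which is the second equation.

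The individual computations are routine substitutions, so the genuinely delicate points are organizational rather than conceptual. I expect the main obstacle to be sign-tracking in two places: matching each first-order combination to the correct Gauss-Codazzi equation so that the negative sign in $-g(R_{UV}X,\xi)$ emerges, and carrying out the $A^{\ast}\leftrightarrow\nabla\xi$ substitution in the $X$ case so that the residual $\tau B$ terms migrate cleanly from the normal to the tangential part without leaving spurious multiples of $\xi$.
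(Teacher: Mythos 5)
Your proposal is correct and follows exactly the route of the paper's own (very terse) proof: apply Lemma \ref{diferenciacurvaturas} with $W=X$ and $W=\xi$, substitute the expressions for $D$ and $\widetilde{\nabla}D$ from Lemma \ref{lemaD}, and identify the surviving terms via Equations (\ref{eqcurvatura1}), (\ref{GC2}) for the first formula and (\ref{ABadjunto}), (\ref{GC3}) for the second. Your accounting of the cancellations (the $\tau C$ terms against the quadratic part, the vanishing of $D(D(U,V),X)$ and of the quadratic terms in the $\xi$ case, and the relocation of the $\tau B$ terms via $\nabla_W\xi=-\tau(W)\xi-A^{\ast}(W)$) is accurate, and in fact supplies the computational details the paper omits.
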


\begin{proof}
The first formula follows using Lemma \ref{diferenciacurvaturas} and
\ref{lemaD} and Formulas (\ref{eqcurvatura1}) and (\ref{GC2}). We can get the
second one using again Lemma \ref{diferenciacurvaturas} and \ref{lemaD} and
Formulas (\ref{ABadjunto}) and (\ref{GC3}).
\end{proof}

More accurated relations can be obtained if $L$ is totally geodesic.

\begin{corollary}
\label{Teor5} Let $L$ be a totally geodesic null hypersurface and $\zeta$ a
closed rigging for it. Given $U,V,W\in\mathfrak{X}(L)$ and $X,Y\in
\mathcal{S}^{\zeta}$ it holds the following.

\begin{enumerate}
\item $R_{UV}W-\widetilde{R}_{UV}W=g(R_{UV}W,N)\xi$, for all $U,V,W\in
\mathfrak{X}(L)$.

\item If $\Pi=span\{X,U\}$ is a tangent plane to $L$, then
\begin{align*}
K(\Pi)  &  =\left(  1+\frac{g(X,X)\widetilde{g}(U,\xi)^{2}}%
{g(X,X)g(U,U)-g(X,U)^{2}}\right)  \widetilde{K}(\Pi)\text{ if $\Pi$ is
spacelike},\\
\mathcal{K}_{\xi}(\Pi)  &  =\widetilde{K}(\Pi)=0\text{ if $\Pi$ is null}.
\end{align*}

\item The Ricci tensor of $\widetilde{g}$ is given by
\begin{align*}
\widetilde{Ric}(X,Y)  &  =Ric(X,Y)-g(R_{\xi X}Y,N)-g(R_{\xi Y}X,N),\\
\widetilde{Ric}(\xi,U)  &  =Ric(\xi,U)=0.
\end{align*}
\

\item If $\widetilde{s}$ and $s$ denote the scalar curvature of $(L,\widetilde
{g})$ and $(M,g)$ respectively, then%
\[
s-\widetilde{s}=4Ric(\xi,N)-2K\left(  span(\xi,N)\right)  .
\]

\end{enumerate}
\end{corollary}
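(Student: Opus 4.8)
The whole corollary is meant to be a specialization of Theorem \ref{Teor3} to the totally geodesic case, so the plan is to feed $B\equiv 0$ into that theorem and then read off each of the four items. I would organize the proof around this single substitution, treating the four parts in order since the later parts build on the earlier ones.

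For item~1, I start from the first formula of Theorem \ref{Teor3}. Setting $B\equiv 0$ kills the terms $B(U,X)\nabla_{V}\xi-B(V,X)\nabla_{U}\xi$ and also forces $g(R_{UV}X,\xi)=0$; moreover by Proposition \ref{segundaformafund} totally geodesic means $\widetilde{\nabla}_{U}\xi=-A^{\ast}(U)$ with $A^{\ast}=0$, so $\xi$ is $\widetilde{g}$-parallel (Corollary \ref{xi paralelo}) and hence $A^{\ast}(V)=A^{\ast}(U)=0$, wiping out the $C$-terms as well. What remains is $R^{L}_{UV}X-\widetilde{R}_{UV}X=g(R_{UV}X,N)\xi$ for $X\in SL(\zeta)$; combining this with the second formula of Theorem \ref{Teor3} (where $\tau$-terms vanish once we note $\tau(U)=-C(\xi,U)$ and $A^{\ast}=0$) and the identity $R^{L}_{UV}\xi=R_{UV}\xi$ from (\ref{eqcurv}) extends the relation to all of $TL$ by linearity, giving $R_{UV}W-\widetilde{R}_{UV}W=g(R_{UV}W,N)\xi$. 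The main subtlety here is bookkeeping: one must verify that the correction term is $g(R_{UV}W,N)$ and not $g(R^{L}_{UV}W,N)$, which is legitimate by the last Gauss-Codazzi equation in (\ref{eqcurvatura1}) stating $g(R_{UV}W,N)=g(R^{L}_{UV}W,N)$.

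For item~2, I pair item~1 with a test vector. For a spacelike plane $\Pi=span\{X,U\}$ I compute $\widetilde{K}(\Pi)$ and $K(\Pi)$ from their definitions using $\widetilde{g}(R_{UV}W,\cdot)$; since the correction $g(R_{UV}W,N)\xi$ is a multiple of $\xi$, its $\widetilde{g}$-inner products with the tangent vectors produce exactly the factors $\widetilde{g}(U,\xi)$, and the ratio in the stated formula arises from normalizing against the area $g(X,X)g(U,U)-g(X,U)^{2}$. For the lightlike case I simply invoke that $L$ totally geodesic gives $\mathcal{K}_{\xi}(\Pi)=0$ (already noted after (\ref{Eq1})) and that item~1 forces $\widetilde{K}(\Pi)=0$ on a degenerate plane. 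Item~3 follows by tracing item~1 over an orthonormal $\widetilde{g}$-frame $\{\xi,e_{3},\dots,e_{n}\}$ of $SL(\zeta)$, using $\widetilde{g}(\xi,\xi)=1$ and the curvature symmetries to collect the two $g(R_{\xi X}Y,N)$ terms; the vanishing $\widetilde{Ric}(\xi,U)=0$ is immediate from item~1 since $R_{UV}\xi=R^{L}_{UV}\xi$ contributes nothing transverse. Finally item~4 is obtained by contracting item~3 once more and carefully accounting for the degenerate directions: the difference $s-\widetilde{s}$ collects the $N$-components, and rewriting the resulting trace in terms of $Ric(\xi,N)$ and the mixed sectional curvature $K(span(\xi,N))$ yields the stated identity.

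The step I expect to be the genuine obstacle is item~2's spacelike formula, where the nontrivial rational factor $1+\tfrac{g(X,X)\widetilde{g}(U,\xi)^{2}}{g(X,X)g(U,U)-g(X,U)^{2}}$ appears. This is not a mechanical contraction of item~1; it reflects the fact that $g$ and $\widetilde{g}$ disagree on the normalization of the plane's area element, since $\widetilde{g}=g+\omega\otimes\omega$ differs from $g$ precisely in the $\xi$-direction. The care needed is to express both $K(\Pi)$ and $\widetilde{K}(\Pi)$ with respect to a common basis, keep track of the difference between $g$- and $\widetilde{g}$-areas, and confirm that the $\xi$-component $g(R_{UV}W,N)\xi$ from item~1 combines with these differing normalizations to produce exactly the displayed coefficient rather than some other correction. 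Items~3 and~4, by contrast, should be routine traces once item~1 is in hand.
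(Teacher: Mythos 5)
Your plan follows the paper's own route: item 1 is Theorem \ref{Teor3} specialized to $B\equiv 0$ (hence $A^{\ast}=0$), cleaned up with the Gauss--Codazzi identities $g(R_{UV}W,\xi)=0$ and $g(R_{UV}W,N)=g(R_{UV}^{L}W,N)$; items 2--4 are then computations. Your treatment of item 2 is exactly the ``straightforward computation'' the paper omits: the numerators of $K(\Pi)$ and $\widetilde{K}(\Pi)$ agree because the correction $g(R_{UV}W,N)\xi$ is $\widetilde{g}$-orthogonal to $SL(\zeta)$ and $\omega$ vanishes there, so the whole factor is the ratio of the $\widetilde{g}$- and $g$-areas of $\Pi$, which is what you identify as the crux. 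Item 4 as you describe it (trace the first formula of item 3, treat the null pair $\{\xi,N\}$ separately, convert $g(R_{NX}Y,\xi)$ into $g(R_{\xi Y}X,N)$ by the curvature symmetries, and use $g(R_{N\xi}N,\xi)=K\big(span(\xi,N)\big)$ together with $\widetilde{Ric}(\xi,\xi)=0$) also goes through.

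The genuine gap is in item 3. You establish only $\widetilde{Ric}(\xi,U)=0$ (correctly: item 1 plus $R_{UV}\xi\in span(\xi)$ give $\widetilde{R}_{UV}\xi=0$), but the statement also asserts $Ric(\xi,U)=0$, and this cannot be reached by tracing item 1: the ambient Ricci trace contains a transverse term, $Ric(\xi,U)=\widetilde{Ric}(\xi,U)+g(R_{N\xi}U,\xi)$, and $g(R_{N\xi}U,\xi)=-g(R_{U\xi}\xi,N)=d\tau(U,\xi)$ by Formula (\ref{GC3}) with $A^{\ast}=0$; no tangential identity (item 1 or Gauss--Codazzi) controls it. In fact it need not vanish: on $\mathbb{R}^{4}$ with $g=2\,du\,dv+2v^{2}\,du\,dx^{1}+(dx^{1})^{2}+(dx^{2})^{2}$, the hypersurface $L=\{u=0\}$ is lightlike and totally geodesic, $\zeta=\nabla v$ is a closed rigging with rigged field $\xi=\partial_{v}$, and one computes $\nabla_{\partial_{x^{1}}}\xi=v\,\xi$, so $\tau=-v\,dx^{1}$, $d\tau(\partial_{x^{1}},\xi)=1$ and $Ric(\xi,\partial_{x^{1}})=1\neq0$, although $(L,\widetilde{g})$ is flat. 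So this half of item 3 requires an extra hypothesis (for instance $\zeta$ closed and conformal, which forces $\tau\equiv0$ by Lemma \ref{campoU} and Proposition \ref{segundaformafund}) or must be weakened to $\widetilde{Ric}(\xi,U)=0$ alone. In fairness, the paper's proof has the same hole: it proves $\widetilde{Ric}(\xi,U)=0$ from the product decomposition of Theorem \ref{descomposicion} and dismisses the rest as straightforward. But your proposal, read as a proof of the corollary as stated, leaves $Ric(\xi,U)=0$ unjustified, and that step cannot be repaired. Item 4 is unaffected, since it uses only the first formula of item 3 and $\widetilde{Ric}(\xi,\xi)=0$.
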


\begin{proof}
The first point follows inmmediately from the above theorem and Gauss-Codazzi
equations. Since $(L,\widetilde{g})$ is locally a direct product
$\mathbb{R}\times S$ with $\xi$ identified with $\partial_{r}$ (see Theorem
\ref{descomposicion} in the next section), it is obvious that $\widetilde
{Ric}(\xi,U)=0$. The rest is a straightforward computation.
\end{proof}

\section{Applications}

\label{seccionaplicaciones} We show several new results on null hypersurfaces
in order to illustrate our approach. The first one is a new result on the
properties of compact totally umbilic null hypersurfaces which  shows that a mild
curvature condition ensures that it is totally geodesic.  We study conditions
in which more specific Riemannian structures can be introduced on a null
hypersurface, such as twisted, warped or direct product metric, which should be considered as a tool itself.
We also study null conjugate points showing that it is possible to translate the problem to a
Riemannian one. We finish this section showing a new feature of the multiplicity of null conjugate points.

We say that a Lorentzian manifold satisfies the reverse null convergence
condition if $Ric(u)\leq0$ for any null vector $u\in TM$.
Although the opposite inequality is the usual in physical applications,  observe that the reverse null convergence condition
includes the important family of Ricci-flat spacetimes.

\begin{theorem}
\label{Th. 2}Let $M$ be an orientable Lorentzian manifold with dimension $n>2$
which obeys the reverse null convergence condition. If there exists a timelike
conformal vector field on $M$, then any compact totally umbilic null
hypersurface is totally geodesic.
\end{theorem}

\begin{proof}
The case $n=3$ is proven in Corollary \ref{Cor1} below, so we suppose $n\geq
4$. Let $L$ be a null umbilic hypersurface with $B=\rho g$ and $\xi
\in\mathfrak{X}(L)$ the rigged vector field of the timelike conformal vector
field. From Proposition \ref{derivadadeLie}, $\left(  L_{\xi}\widetilde
{g}\right)  (X,Y)=-2\rho\widetilde{g}(X,Y)$ for all $X,Y\in\mathcal{S}$, thus
\begin{align*}
\widetilde{g}(S^{2}(X),X)  &  =-\widetilde{g}(S(X),S(X))-2\rho\widetilde
{g}(S(X),X)\\
&  =-\widetilde{g}(S(X),S(X))+2\rho^{2}\widetilde{g}(X,X),
\end{align*}
where, as above, $S(X)=\widetilde{\nabla}_{X}\xi$, Therefore $tr(S^{2}%
)=-|S^{\perp}|^{2}+2(n-2)\rho^{2}$ and applying Lemma \ref{campoU} and
Corollary \ref{Riccixi},
\[
Ric(\xi)-\widetilde{Ric}(\xi)=-\widetilde{\operatorname{div}}\widetilde
{\nabla}_{\xi}\xi+tr(S^{2})-(n-2)\rho^{2}.
\]

Integrating respect to $\widetilde{g}$,
\[
\int_{L}Ric(\xi)=\int_{L}\widetilde{Ric}(\xi)+tr(S^{2})-(n-2)\rho^{2},
\]
and using the Bochner formula $\int_{L}\widetilde{Ric}(\xi)+tr(S^{2})=\int
_{L}tr(S)^{2}=\int_{L}(n-2)^{2}\rho^{2}$, we get
\[
\int_{L}Ric(\xi)=\int_{L}(n-2)(n-3)\rho^{2}.
\]

Since $M$ holds the reverse null convergence condition, $\rho=0$ so $L$ is
totally geodesic.
\end{proof}

\begin{example}
This example holds the assumptions of the above theorem. Take the torus
\[
\mathbb{T}^{n}=\left(  \mathbb{S}^{1}\times\ldots\times\mathbb{S}^{1}
,dx_{1}dx_{2}+\sum_{i=3}^{n}dx_{i}^{2}\right).
\]

It is flat and
$L=\{x\in\mathbb{T}^{n}:x_{2}=0\}$ is a compact and totally geodesic null hypersurface.
\end{example}

The following theorem says that the local structure of a totally umbilic null
hypersurface, if we consider the induced metric $\widetilde{g}$ from a closed
rigging, is a twisted product.

\begin{theorem}
\label{descomposicion} Let $M$ be a Lorentzian manifold, $L$ a totally umbilic
null hypersurface and $\zeta$ a closed rigging for $L$. Given $p\in L$,
$(L,\widetilde{g})$ is locally isometric to a twisted product $(\mathbb{R}%
\times S,dr^{2}+\lambda^{2}g|_{S})$, where the rigged vector field $\xi$ is
identified with $\partial_{r}$, $S$ is the leaf of $\mathcal{S}^{\zeta}$
through $p$ and
\[
\lambda(r,q)=exp\left(  -\int_{0}^{r}\frac{H(\phi_{s}(q))}{n-2}ds\right),
\]
being $\phi$ the flow of $\xi$. In particular, $dH$ is proportional to
$\omega$ if and only if $(L,\widetilde{g})$ is locally isometric to a warped
product and $L$ is totally geodesic if and only if $(L,\widetilde{g})$ is
locally isometric to a direct product.

Moreover, if $L$ is simply connected and $\xi$ is complete, the above
decomposition is global.
\end{theorem}

\begin{proof}
For simplicity, we suppose that $\xi$ is complete. Since $d\omega=0$, Cartan
formula implies $L_{\xi}\omega=0$, so the flow $\phi$ of $\xi$ is foliated,
that is, $\phi_{r}(S_{q})=S_{\phi_{r}(q)}$ for all $q\in L$ and $r\in
\mathbb{R}$, being $S_{q}$ the leaf of $\mathcal{S}^{\zeta}$ through $q$.
Using this, it is easy to check that $\phi:\mathbb{R}\times S_{p}\rightarrow
L$ is onto and a local diffeomorphism.
Recall that $\{0\}\times S_p$ is identified with $S_p$ itself.
From Proposition \ref{derivadadeLie},
$\left(  L_{\xi}\widetilde{g}\right)  (X,Y)=-2\frac{H}{n-2}\widetilde{g}(X,Y)$
for all $X,Y\in\mathcal{S}^{\zeta}$. Therefore $\phi_{r}:S_{q}\rightarrow
S_{\phi_{r}(q)}$ is a conformal diffeomorphism with conformal factor
$\exp\left(  -2\int_{0}^{r}\frac{H(\phi_{s}(q))}{n-2}ds\right)  $ and it
follows that $\phi^{\ast}(\widetilde{g})=dr^{2}+\lambda^{2}g|_{S_{p}}$, being
$\lambda(r,q)=exp\left(  -\int_{0}^{r}\frac{H(\phi_{s}(q))}{n-2}ds\right)  $.
We show now that $\phi$ is a covering map. Let $\sigma:[0,1]\rightarrow L$ be
a $\widetilde{g}$-geodesic and $(r_{0},x_{0})\in\mathbb{R}\times S_{p}$ a
point such that $\phi(r_{0},x_{0})=\sigma(0)$. We must show that there exists
a lift $\alpha:[0,1]\rightarrow\mathbb{R}\times S_{p}$ of $\sigma$ through
$\phi$ starting at $(r_{0},x_{0})$, \cite[Chapter 7, Theorem 28]{O}. There is a $\widetilde{g}%
$-geodesic $\alpha:[0,s_{0})\rightarrow\mathbb{R}\times S_{p}$, $\alpha
(s)=(r(s),x(s))$, such that $\phi\circ\alpha=\sigma$ and $\alpha
(0)=(r_{0},x_{0})$ because $\phi$ is a local isometry. If we suppose $s_{0}<1
$, there is a geodesic $(r_{1}(s),x_{1}(s))$ such that $\phi(r_{1}%
(s),x_{1}(s))=\sigma(s)$ with $s\in(s_{0}-\varepsilon,s_{0}+\varepsilon)$,
then in the open interval $(s_{0}-\varepsilon,s_{0})$ it holds $\phi
(r(s),x(s))=\phi((r_{1}(s),x_{1}(s))$. Differentiating and using that $\phi$
is foliated, it is easy to see that $r_{1}(s)-r(s)=c\in\mathbb{R} $.
Therefore, it exists $\lim_{s\rightarrow s_{0}}\alpha(s)$ and the
$\widetilde{g}$-geodesic $\alpha$ is extendible.
\end{proof}

This result is applied in Theorem \ref{Teor1} to prove a property concerning the multiplicity of null conjugate point.

\begin{remark}
\label{Nota1}Locally, it always exists a closed timelike vector field, so we
can apply the above theorem to any small enough open set in a totally umbilic
null hypersurface.
\end{remark}

We can also obtain a global decomposition assuming the existence of a timelike
gradient field on $M$ instead of the simply connectedness of $L$. Indeed,
suppose that $f\in C^{\infty}(M)$ is a function with $\zeta=\nabla f$
timelike. If $\gamma:\mathbb{R}\rightarrow L$ is an integral curve of $\xi$,
then $f(\gamma(t))$ is increasing (or decreasing), and since $f$ is constant
along the leaves of $\mathcal{S}^{\zeta}$, $\gamma$ intersects any leaf of
$\mathcal{S}^{\zeta}$ at only one point. Therefore, the map $\phi$ used in the
above proof is injective and $L$ splits globally as $\mathbb{R}\times S$.
Recall that in a stably causal space it always exists a timelike gradient vector field.

\begin{remark}
\label{remarkcompacto} Compactness is an obstruction to get the global
decomposition of a totally umbilic null hypersuperface. Even more, a timelike
gradient field prevents the existence of compact null hypersurfaces (not
necessarily totally umbilic). In fact, in $L$ we can decompose $\nabla
f=X+a\xi+bN$, being $X\in\mathcal{S}^{\zeta}$ and $a,b\in C^{\infty}(L)$. Now,
$\widetilde{\nabla}(f\circ i)=X+b\xi$ and, by compactness, there is a point in
$L$ where $\widetilde{\nabla}(f\circ i)=0$, but then $\nabla f$ is null in
this point, which is a contradiction.
\end{remark}

We give now an example that shows to what extent we can tune the Lorentz metric of the ambient space with the Riemann metric on a null hypersurface, simply by choosing the correct rigging vector field. Suppose we have a null geodesic $\gamma$ through a point $p=\gamma (0)$ with null conjugate points along it. We take the null cone with vertex at $p$ and a suitable rigging vector field on it to show that $\gamma$ is also a geodesic for the rigged metric and both geometries share conjugate points along $\gamma$ as well as its multiplicity. Moreover, we localize the null conjugate point with a different technique used in the literature. In fact, we use Riemannian techniques on the null cone with the rigged Riemannian metric.

Recall that the rigged vector field of a closed rigging is $\widetilde{g}$-geodesic (Proposition \ref{segundaformafund}).

\begin{proposition}
\label{puntosconjugados}Let $(M,g)$ be a Lorentzian manifold, $L$ a null
hypersurface and $\zeta$ a closed rigging for $L$ such that its rigged vector
field $\xi$ is $g$-geodesic. Take $\gamma:I\rightarrow L$ an integral curve of
$\xi$.

\begin{enumerate}
\item If $J$ is a Jacobi field in $(M,g)$ along $\gamma$ with values in $TL$,
then the $\widetilde{g}$-orthogonal projection of $J$ onto $\mathcal{S}^{\zeta}$ is a Jacobi field in
$(L,\widetilde{g})$.

\item If $I=[0,a]$ and $V$ is a Jacobi vector field in $(L,\widetilde{g})$
along $\gamma$ with $V(0)=V(a)=0$, then there exists a Jacobi field $J$ in
$(M,g)$ along $\gamma$ with values in $TL$ such that $J(0)=J(a)=0$.
\end{enumerate}

In particular, $\gamma(a)$ is a conjugate point to $\gamma(0)$ in $(M,g)$ if and only if it is a conjugate point to $\gamma(0)$ in $(L,\widetilde{g})$ and both share the same multiplicity.
\end{proposition}

\begin{proof}
Fix $X\in\mathcal{S}^{\zeta}$ and take an arbitrary $Y\in\mathcal{S}^{\zeta}$.
Using Proposition \ref{Nablas} repeatedly, we get
\begin{align*}
\widetilde{g}(\widetilde{\nabla}_{\xi}\widetilde{\nabla}_{\xi}X,Y)  &
=g(\nabla_{\xi}\widetilde{\nabla}_{\xi}X,Y)=\xi(g(\widetilde{\nabla}_{\xi
}X,Y))-g(\widetilde{\nabla}_{\xi}X,\nabla_{\xi}Y)\\
&  =\xi(g(\nabla_{\xi}X,Y))-g(\nabla_{\xi}X,\nabla_{\xi}Y)=g(\nabla_{\xi
}\nabla_{\xi}X,Y).
\end{align*}

On the other hand, from Equation (\ref{eqcurv}) and Theorem \ref{Teor3}, we
have
\[
\widetilde{g}(\widetilde{R}_{X\xi}\xi,Y)=g(R_{X\xi}\xi,Y),
\]
thus
\begin{equation}
\widetilde{g}(\widetilde{\nabla}_{\xi}\widetilde{\nabla}_{\xi}X+\widetilde
{R}_{X\xi}\xi,Y)=g(\nabla_{\xi}\nabla_{\xi}X+R_{X\xi}\xi,Y). \label{Eq2}%
\end{equation}

Now, suppose that $J$ is a Jacobi field in $(M,g)$ along an integral curve of
$\xi$ with values in $TL$ and call $X$ its projection onto $\mathcal{S}%
^{\zeta}$. Point one easily follows taking into account that $\widetilde
{\nabla}_{\xi}\widetilde{\nabla}_{\xi}X+\widetilde{R}_{X\xi}\xi\in
\mathcal{S}^{\zeta}$, the above formula and $g(\nabla_{\xi}\nabla_{\xi
}J+R_{J\xi}\xi,Y)=g(\nabla_{\xi}\nabla_{\xi}X+R_{X\xi}\xi,Y)$.

Take now $V$ a Jacobi field in $(L,\widetilde{g})$. Since its projection $X$
onto $\mathcal{S}^{\zeta}$ is also a Jacobi field in $(L,\widetilde{g})$, from
equation (\ref{Eq2}) we deduce that
\[
R_{X\xi}\xi+\nabla_{\xi}\nabla_{\xi}X=f\xi,
\]
so $J=X-h\xi$, where $h^{\prime\prime}=f$ and $h(0)=h(a)=0$, is a Jacobi field
in $(M,g)$ with values in $TL$ such that $J(0)=J(a)=0$.

The claim on the multiplicities follows from the above proof.
\end{proof}

Observe that, in general, the projection over $\mathcal{S}^{\zeta}$ of a
Jacobi vector field in $(M,g)$ is not a Jacobi field in $(M,g)$.

Now, we want to use the relationship between conjugate points in $(M,g)$ and
$(L,\widetilde{g})$ to prove a localization result for conjugate points along
a null geodesic. To this end, we need an adapted comparison theorem for
incomplete geodesics in a Riemannian manifold. We introduce the following definitions.

\begin{definition}
Let $(L,\widetilde{g})$ be a Riemannian manifold and $\gamma:(0,a)\rightarrow
L$ an arc length parametrized geodesic.

\begin{itemize}
\item If $\lim_{t\rightarrow0}\widetilde{g}(\widetilde{R}_{X\gamma^{\prime}%
}\gamma^{\prime},Y)$ exists for all parallel vector fields $X,Y$ along
$\gamma$ and orthogonal to $\gamma^{\prime}$, then we say that the tidal force
operator is converging along $\gamma$.

\item If for any Jacobi vector field $J:(0,a)\rightarrow TL$ along $\gamma$
with $\lim_{t\rightarrow0}|J(t)|=0$ it holds $|J(t)|>0$ for all $t\in(0,a)$,
then we say that $\gamma$ has not conjugate points in the interval $(0,a)$.

\item If $\gamma$ has not conjugate points in the interval $(0,a)$ and there
exists a Jacobi vector field $J:(0,a)\rightarrow TL$ with $\lim_{t\rightarrow
0} |J(t)|=\lim_{t\rightarrow a} |J(t)|=0$, we say that $\gamma$ has a
conjugate point in the limit.
\end{itemize}
\end{definition}

If the tidal force operator is converging along $\gamma$, then the Jacobi
equation, written in coordinates respect to a fixed parallel frame
$\{E_{1},\ldots,E_{n}\}$ along $\gamma$, can be considered for all $t\in[0,a)$
and the existence and uniqueness of the solutions, fixed initial condition for
$t=0$, is guaranteed. In particular, a Jacobi vector field with $\lim
_{t\rightarrow0}|J(t)|=\lim_{t\rightarrow0}|J^{\prime}(t)|=0$ is identically
zero. Thus, if $\gamma$ has not conjugate point in the interval $(0,a)$, we
can ensure the existence of $J_{1},\ldots,J_{n-1}:(0,a)\rightarrow TL$ Jacobi
fields such that $\lim_{t\rightarrow0}|J(t)|=0$, $\lim_{t\rightarrow
0}|J^{\prime}(t)|$ exists and $J_{1}(t),\ldots,J_{n-1}(t)$ is a basis of
$\gamma^{\prime\perp}$ for all $t\in(0,a)$. On the other hand, given
$V:(0,a)\rightarrow TL$ a vector field along $\gamma$ we define the index form
as $I_{t_{0}}(V,V)=\int_{0}^{t_{0}}\widetilde{g}(V^{\prime},V^{\prime
})-\widetilde{g}(\widetilde{R}_{V\gamma^{\prime}}\gamma^{\prime},V)$. This
integral can be diverging or even not exist, but if $J:(0,a)\rightarrow TL$ is
a Jacobi vector field with $\lim_{t\rightarrow0}|J(t)|=0$ and such that
$\lim_{t\rightarrow0}|J^{\prime}(t)|$ exists, then $I_{t_{0}}(J,J)=\widetilde
{g}(J(t_{0}),J^{\prime}(t_{0}))$. Taking all this into account, the proof of
the classical index lemma and the Rauch comparison theorem, see for example \cite{1975CheegerEbin}, can be followed step by step to state the following.

\begin{lemma}
Let $(L,\widetilde{g})$ be a Riemann manifold, $\gamma:(0,a)\rightarrow L$ an
arc length parametrized geodesic without conjugate points in the interval
$(0,a)$ and $J,V:(0,a)\rightarrow TL$ vector fields along $\gamma$ with
$\lim_{t\rightarrow0}|J(t)|=\lim_{t\rightarrow0}|V(t)|=0$, $\lim
_{t\rightarrow0}|J^{\prime}(t)|$ exists, $g(J,\gamma^{\prime})=g(V,\gamma
^{\prime})=0$ and $J(t_{0})=V(t_{0})$ for some $t_{0}\in(0,a)$. If $J$ is a Jacobi vector field, the tidal
force operator is converging along $\gamma$ and $I_{t_{0}}(V,V)$ exists, then
$I_{t_{0}}(J,J)\leq I_{t_{0}}(V,V)$.
\end{lemma}

\begin{theorem}
\label{Rauchadaptado} Let $(L,\widetilde{g})$ and $(\overline{L},\overline
{g})$ be two Riemannian manifolds and $\gamma:(0,a)\rightarrow L$,
$\overline{\gamma}:[0,a]\rightarrow\overline{L}$ two arc length parametrized
geodesics. Suppose that the tidal force operator is converging along $\gamma$
and take $J:(0,a)\rightarrow TL$ and $\overline{J}:[0,a]\rightarrow
T\overline{L}$ two Jacobi vector fields such that $\lim_{t\rightarrow
0}|J(t)|=|\overline{J}(0)|=0$, $\lim_{t\rightarrow0}|J^{\prime}(t)|=|\overline
{J}^{\prime}(0)|$ and $\widetilde{g}(J(t),\gamma^{\prime}(t))=\overline
{g}(\overline{J}(t),\overline{\gamma}^{\prime}(t))=0$ for all $t\in(0,a)$.
Then , the following statements hold.

\begin{itemize}
\item If $\overline{\gamma}$ has not conjugate points to $\overline{\gamma
}(0)$ and $K(span(\gamma^{\prime},v))\leq\overline{K}(span(\overline{\gamma
}^{\prime},\overline{v}))$ for all $v\in\gamma^{\prime\perp}$ and all
$\overline{v}\in\overline{\gamma}^{\prime}(t)$, then $|\overline{J}%
(t)|\leq|J(t)|$ for all $t\in(0,a)$.

\item If $\gamma$ has not conjugate points in the interval $(0,a)$ and
$K(span(\gamma^{\prime},v))\geq\overline{K}(span(\overline{\gamma}^{\prime
},\overline{v}))$, then $|\overline{J}(t)|\geq|J(t)|$.
\end{itemize}
\end{theorem}

Now, we are able to give a new proof of the following localization result for conjugate
point along a null geodesic using Riemannian techniques. Point 1 of this theorem was proven using a different technique in \cite{Harris}.

\begin{theorem}
\label{Teor4}Let $M$ be a Lorentzian manifold and $\gamma:[0,a]\rightarrow M$
a null geodesic such that $\gamma(a)$ is the first conjugate point to
$\gamma(0)$ along $\gamma$. Let $c>0$ be a constant.

\begin{enumerate}
\item If $c^{2}\leq\mathcal{K}_{\gamma^{\prime}}(\Pi)$ for all null plane
containing $\gamma^{\prime}$, then $a\leq\frac{\pi}{c}$.

\item If $\mathcal{K}_{\gamma^{\prime}}(\Pi)\leq c^{2}$ for all null plane
containing $\gamma^{\prime}$, then $\frac{\pi}{c}\leq a$.
\end{enumerate}
\end{theorem}

\begin{proof}
Call $p=\gamma(0)$ and suppose $\gamma(t)=\exp_{p}(tu)$ for all $t\in[0,a]$,
where $u\in T_{p}M$ is a null vector. Without loss of generality, we can
suppose that $\gamma$ does not intersect itself. In this case, there exist
open subsets $\widehat{\theta}\subset T_{p}M$ and $\theta\subset M$ with
$tu\in\widehat{\theta}$ and $\gamma(t)\in\theta$ for all $t\in[0,a)$  such
that $exp_{p}:\widehat{\theta}\rightarrow\theta$ is a diffeomorphism. Now,
$L=\exp_{p}(\widehat{\theta}\cap\widehat{C}_{p})$ is a null hypersurface which contains $\gamma(t)$ for all $t\in(0,a)$.

Take $e\in T_{p}M$ a timelike unitary vector with $g_{p}(u,e)=1$ and define $\widehat{f}:\widehat{\theta} \rightarrow \mathbb{R}$ the function
given by $\widehat{f}(v)=g_{p}(v,e)$. Then, $\zeta=\nabla f$, where $f=\widehat{f} \circ exp_{p}^{-1}:\theta \rightarrow \mathbb{R}$, is a
rigging for $L$ and its rigged vector field is $\xi=\frac{P}{f}$ where $P$ is the position vector field at $p$ restricted to $L$. Moreover,
it is straightforward to see that $\xi$ is $g$- geodesic and that $\gamma:(0,a)\rightarrow L$ is an integral curve of $\xi$.

Since $\gamma(a)$ is the first conjugate point to $\gamma(0)$, there is
$J:[0,a]\rightarrow TL$ a Jacobi field along $\gamma$ with $J(0)=0$, $J(a)=0$,
$J(t)$ non-parallel to $\gamma^{\prime}(t)$ and $J(t)\neq0$ for all
$t\in(0,a)$. Applying Proposition \ref{puntosconjugados}, its projection $X$
onto $\mathcal{S}^{\zeta}$ is a Jacobi field in the Riemannian manifold
$(L,\widetilde{g})$ over the $\widetilde{g}$-geodesic $\gamma:(0,a)\rightarrow
L$ (recall that $\xi$ is $\widetilde{g}$-geodesic because the rigging is closed). Taking into account that $\widetilde{g}(X,X)=g(J,J)$ and that
$\widetilde{g}(\widetilde{\nabla}_{\xi}X,\widetilde{\nabla}_{\xi}%
X)=g(\nabla_{\xi}J,\nabla_{\xi}J)$ (Proposition \ref{Nablas}), it follows that
$\lim_{t\rightarrow0}\left\vert X(t)\right\vert =\lim_{t\rightarrow
a}\left\vert X(t)\right\vert =0$, $X(t)\neq0$ for all $t\in(0,a)$ and
$\lim_{t\rightarrow0}|\widetilde{\nabla}_{\xi}X|$ exists.

Now, if $V:[0,a]\rightarrow TL $ is a $g$-parallel vector field along $\gamma
$, using Proposition \ref{Nablas}, it is easy to show that its projection onto
$\mathcal{S}^{\zeta}$ is $\widetilde{g}$-parallel. Applying this, Equation
(\ref{eqcurv}) and Theorem \ref{Teor3} we can check that the tidal force
operator is converging along $\gamma$.

Finally, by Corollary \ref{Cor2}, we have $c^{2}\leq\widetilde{K}(\Pi)$ ( or
$\widetilde{K}(\Pi)\leq c^{2}$) for any plane containing $\xi$ and a standard
application of Theorem \ref{Rauchadaptado} gives us the result.
\end{proof}

Observe that in $(M,g)$ the maximum multiplicity of a null conjugate point $\gamma (a)$ of $\gamma (0)$ along a null geodesic $\gamma$ is $n-2$. This is because if $J$ is a Jacobi field along $\gamma$  with $J(0)=J(a)=0$, it must be orthogonal to $\gamma$ but not proportional to it, \cite[p. 218]{O}. In $(L,\widetilde{g})$ the maximum multiplicity of $\gamma (a)$ is also $n-2$ because $(L,\widetilde{g})$ is a $(n-1)$-dimensional Riemannian manifold. In \cite{2000Flores} it was proved that null conjugate points in Robertson-Walker spaces have maximum multiplicity. After, we proved in \cite{GutOlea15} that it is also true for a null geodesic in a generalized Robertson-Walker space provided it is contained in a totally umbilic null cone, which suggests that it could be a general feature of totally umbilic null cones itself. The following result shows that this is the case.

\begin{theorem} \label{Teor1}
Let $M$ be a Lorentzian manifold and $\gamma:[0,a]\rightarrow M$
a null geodesic such that $\gamma(a)$ is the first conjugate point to
$\gamma(0)$ along $\gamma$. If the null cone with vertex at $\gamma(0)$ containing $\gamma$ is totally umbilic, then $\gamma(a)$ has maximum multiplicity.
\end{theorem}

\begin{proof}
We consider the same construction as in the above proof. Take a $\widetilde{g}$-Jacobi field $J$ along $\gamma$ such that $J \in \mathcal{S}^{\zeta}$. Using  Theorem \ref{descomposicion}
and the formulas for the curvature in a twisted product (see \cite{1993PongeReckziegel}), it is easy to
see that $\widetilde{R}_{J\xi}\xi=\frac{\widetilde{Ric}(\xi) }{n-2} J$. Thus, the multiplicity of $\gamma(a)$ as a $\widetilde{g}$-conjugate
point is maximal, that is, $n-2$ and by Proposition \ref{puntosconjugados} the same is true in $(M,g)$.
\end{proof}

\section{Null surfaces}

\label{NullSurfaces}In this section we point out how can be adapted section
\ref{seccionproyeccion} to null submanifold of arbitrary dimension. After
that, we particularize to null surfaces.

Let $\Sigma$ be a $k$ dimensional null submanifold of a time-orientable
Lorentzian manifold $(M,g)$. The main difference from hypersurfaces is that
now, in general, $T\Sigma^{\perp}$ is not contained in $T\Sigma$ and has
dimension greater than one, so we take $Rad(T\Sigma)=T\Sigma\cap
T\Sigma^{\perp}$, which is a one dimensional null distribution in $\Sigma$.

\begin{definition}
A rigging for $\Sigma$ is a vector field $\zeta$ defined in some neighborhood
of $\Sigma$ such that $\zeta\notin Rad(T\Sigma)^{\perp}$.
\end{definition}

We take the screen distribution and the transversal screen distribution given
by
\begin{align*}
\mathcal{S}^{\zeta}  &  =T\Sigma\cap\zeta^{\perp},\\
tr\mathcal{S}^{\zeta}  &  =T\Sigma^{\perp}\cap\zeta^{\perp}.
\end{align*}

We construct $\widetilde{g}$ and the rigged vector field $\xi$ as in section
\ref{seccionproyeccion}. It holds $TM=T\Sigma\oplus tr\mathcal{S}^{\zeta
}\oplus span\{N\}$, where $N=\zeta-\frac{1}{2}g(\zeta,\zeta)\xi$. Thus, given
$U,V\in\mathfrak{X}(\Sigma)$, we can decompose
\[
\nabla_{U}V=\nabla_{U}^{\Sigma}V+h^{s}(U,V)+B(U,V)N,
\]
where $\nabla_{U}^{\Sigma}V\in T\Sigma$ and $h^{s}(U,V)\in tr\mathcal{S}%
^{\zeta}$. $h^{s}$ is called the \textit{screen second fundamental form} and
$B$ the \textit{null second fundamental form}. If $h^{s}=B=0$, then $\Sigma$
is totally geodesic, which is an intrisic property. The nullity of $B$ is an
intrisic property of the null submanifold too, but it is weaker than being
totally geodesic.

Since $\nabla^{\Sigma}_{U}\xi\in T\Sigma$, we decompose $\nabla^{\Sigma}%
_{U}\xi=-A^{*}(U)-\tau(U)\xi$, where $A^{*}(U)\in\mathcal{S}^{\zeta}$ and
$\tau$ is a one form. It holds $A^{*}(\xi)=0$ and $B(\xi,\cdot)=0$, thus
$\nabla_{\xi}\xi=h^{s}(\xi,\xi)-\tau(\xi)\xi$.

Unlike hypersuperfaces, the induced null curvature $\mathcal{K}_{\xi}^{\Sigma
}$ defined as $\mathcal{K}_{\xi}^{\Sigma}(\Pi)=g(R_{X\xi}^{\Sigma}\xi,X)$
where $\Pi=span(X,\xi)$ with $X\in\mathcal{S}^{\zeta}$ unitary, does not
coincide with the ambient null sectional curvature $\mathcal{K}_{\xi}$.
Indeed, it holds
\begin{align}
\mathcal{K}_{\xi}(\Pi)  &  =\mathcal{K}_{\xi}^{\Sigma}(\Pi)+g\left(
h^{s}(X,\xi),h^{s}(X,\xi)\right)  -g\left(  h^{s}(X,X),h^{s}(\xi,\xi)\right)
,\label{curvaturaluzsuperficie}\\
\mathcal{K}_{\xi}^{\Sigma}(\Pi)  &  =\tau(\xi)B(X,X)+g\left(  \left(
\nabla_{\xi}^{\Sigma}A^{\ast}\right)  (X),X\right)  -g\left(  \left(
\nabla_{X}^{\Sigma}A^{\ast}\right)  (\xi),X\right)  ,
\label{curvaturaluzsuperficie2}%
\end{align}
where $\Pi=span(X,\xi)$ and $X\in\mathcal{S}^{\zeta}$ is unitary.

Proposition \ref{diferenciaconexiones} and all derived results still hold in
the case of a null submanifold of arbitrary dimension. In particular,
$\tau(U)=g(\xi,\nabla_{U}\zeta)$ for all $U\in\mathfrak{X}(\Sigma)$. Thus, if
the rigging is conformal, then $\tau(\xi)=0$, but unlike hypersurfaces, $\xi$
does not need to be geodesic.

In \cite{Birman} it is shown that the integral of the curvature of an
orientable, time-orientable and compact Lorentzian surface is zero. Theorem
\ref{teoremasuperficiesluz} states an analogous result for null surfaces of a
Lorentzian manifold. Recall that for a null surface $\Sigma$, the null second
fundamental form $B$ can be considered as a function on $\Sigma$, as well as
the one-form $\tau$, the induced null curvature $\mathcal{K}_{\xi}%
^{\Sigma}$ and the Gauss curvature of $\left(  \Sigma,\widetilde{g}\right)  $.

\begin{theorem}
\label{teoremasuperficiesluz} Let $M$ be a Lorentzian manifold and $\Sigma$ a
null surface. If $\zeta$ is a rigging for $\Sigma$, then it holds
\[
\mathcal{K}_{\xi}^{\Sigma}-\widetilde{K}=\tau(\xi)B-\widetilde{div}%
\widetilde{\nabla}_{\xi}\xi.
\]

In particular, if $\Sigma$ is compact and orientable and the rigging $\zeta$
is conformal, then
\[
\int_{\Sigma}\mathcal{K}_{\xi}^{\Sigma}d\widetilde{g}=0.
\]

\end{theorem}

\begin{proof}
Suppose that $\Pi=span(X,\xi)$, with $X$ unitary, is a tangent plane to
$\Sigma$. Since $dim\,\Sigma=2$, it follows that $\xi$ is orthogonally normal
and $\widetilde{\nabla}_{X}\xi=aX$, $\widetilde{\nabla}_{\xi}\xi=bX$, so%
\[
\widetilde{g}(S^{2}(X),X)-\widetilde{g}(S(X),S(X))=0,
\]
being $S(X)=\widetilde{\nabla}_{X}\xi$. Moreover, $\widetilde{div}%
\widetilde{\nabla}_{\xi}\xi=g(\widetilde{\nabla}_{X}\widetilde{\nabla}_{\xi
}\xi,X)-g(\widetilde{\nabla}_{\xi}\xi,X)^{2}$. The proof of Theorem
\ref{diferenciacurvaturas2} is still valid in this situation and therefore we
have
\[
\mathcal{K}_{\xi}^{\Sigma}(\Pi)-\widetilde{K}(\Pi)=\tau(\xi)B(X,X)-\widetilde
{div}\widetilde{\nabla}_{\xi}\xi.
\]

For the second part, observe that the Euler-Poincar\'{e} characteristic of
$\Sigma$ is zero.
\end{proof}

The following corollary extends Theorem \ref{Th. 2} to the three dimensional case.

\begin{corollary}
\label{Cor1}Let $M$ be a three dimensional Lorentzian manifold furnished with
a timelike conformal vector field. If it holds the (reverse) null convergence
condition, then any compact and orientable null surface is totally geodesic.

Moreover, if $Ric(u)\neq0$ for all null vector $u$, then it can not exists any
compact orientable null surface.
\end{corollary}

\begin{proof}
Let $\Sigma$ be a null surface and consider the timelike conformal vector
field $\zeta$ as the rigging field for $\Sigma$ with rigged field $\xi$.
Recall that in this case $h^{s}\equiv0$ and $Ric(\xi)=\mathcal{K}_{\xi
}=\mathcal{K}_{\xi}^{\Sigma}$. If we consider $B$ and $\mathcal{K}_{\xi}$ as
functions on $\Sigma$, Formula (\ref{curvaturaluzsuperficie}) can be written
as $\mathcal{K}_{\xi}=\xi(B)-B^{2}$. By hypothesis, $\mathcal{K}_{\xi}$ has
sign and applying the above theorem $\mathcal{K}_{\xi}=0$. Since $\Sigma$ is
compact, $\xi$ is a complete vector field, thus $B\equiv0$.
\end{proof}

\begin{example}
We already know that Lorentzian Berger spheres $\left(  \mathbb{S}%
^{2n+1},g_{L}\right)  $ do not admit closed embedded null hypersurfaces (example
\ref{ejemploBerger}). In particular, $\left(  \mathbb{S}^{3},g_{L}\right)  $
does not admit compact null surfaces.

However, there exist compact null surfaces in $\left(  \mathbb{S}^{2n+1}%
,g_{L}\right)  $ with $n\geq2$. In fact, consider the Euclidean sphere
$\left(  \mathbb{S}^{2n+1},g_{R}\right)  $ in $\mathbb{C}^{n+1}$ and call $E$
the Hopf vector field given by $E_{(z_{0},\ldots,z_{n})}=(iz_{0},\ldots
,iz_{n})$. Choose\ $r_{0},r_{1}\in\mathbb{R}^{+}$, and $\lambda_{k}%
\in\mathbb{C}$, $k=2,\ldots,n$, with $r_{0}^{2}+r_{1}^{2}=\frac{1}{2}$ and
$|\lambda_{2}|^{2}+\ldots+|\lambda_{n}|^{2}=\frac{1}{2}$. We can parametrize
the surface
\[
\Sigma=\{(z_{0},\ldots,z_{n})\in\mathbb{S}^{2n+1}:|z_{0}|=r_{0}\text{, }%
|z_{1}|=r_{1},\ z_{2}=\lambda_{2},\ldots,\ z_{n}=\lambda_{n}\}
\]
as
\begin{align*}
\Phi:\left(  0,2\pi\right)  \times\left(  0,2\pi\right)   &  \longrightarrow
\mathbb{S}^{2n+1}\\
(t,s)  &  \mapsto\left(  r_{0}e^{it},r_{1}e^{is},\lambda_{2},\ldots
,\lambda_{n}\right)  .
\end{align*}

It holds
\begin{align*}
g_{R}(\Phi_{t},\Phi_{t})  &  =r_{0}^{2},\ \ \ g_{R}(\Phi_{s},\Phi_{s}%
)=r_{1}^{2},\ \ \ g_{R}(\Phi_{t},\Phi_{s})=0,\\
g_{R}(\Phi_{t},E)  &  =r_{0}^{2},\ \ \ g_{R}(\Phi_{s},E)=r_{1}^{2}.
\end{align*}

Therefore, if we take the Lorentzian Berger metric $g_{L}=g_{R}-2\Omega
\otimes\Omega$, being $\Omega$ the $g_{R}$-metrically equivalent one-form to
$E$, then $\Sigma$ is a compact null surface in $\left(  \mathbb{S}%
^{2n+1},g_{L}\right)  $.

Observe that for these surfaces $h^{s}\not \equiv 0$. On the contrary case,
$\int_{\Sigma}\mathcal{K}_{\xi}(\Pi)d\widetilde{g}=0$, being $\Pi$ a tangent
plane to $\Sigma$ and $\xi$ the rigged of $E$. But $\left(  \mathbb{S}%
^{2n+1},g_{L}\right)  $ has strictly positive null sectional curvature.
\end{example}

\begin{corollary}
Let $M$ be a Lorentzian manifold such that $\mathcal{K}(\Pi)<0$ for all null
plane $\Pi$ and $\Sigma$ an orientable null surface. If there exists a
geodesic null vector field $\xi\in\mathfrak{X}(\Sigma)$, then $\Sigma$ can not
be compact.
\end{corollary}

\def\bstname{mn}

\end{document}